\DeclareMathOperator{\trace}{tr}
\newtheorem{theorem}{Theorem}
\newtheorem{lemma}{Lemma}
\newtheorem{observation}{Observation}
\newtheorem{conjecture}{Conjecture}
\theoremstyle{definition}
\newtheorem{definition}{Definition}
\title{Enumerating multiplex juggling patterns}
\author{Steve Butler\footnote{Dept. of Mathematics, Iowa State University, Ames, IA 50011, USA; email: {\tt butler@iastate.edu}} \and Jeongyoon Choi\footnote{Gyeonggi Science High School for the Gifted, Gyeonggi Province, Republic of Korea; email: {\tt ericggul@gmail.com} (Choi), {\tt rlaud99102@naver.com} (Kim), {\tt 99skh@naver.com} (Seo)} \and Kimyung Kim\footnotemark[2] \and Kyuhyeok Seo\footnotemark[2]}
\date{\empty}
\begin{document}
\maketitle

\begin{abstract}
Mathematics has been used in the exploration and enumeration of juggling patterns.  In the case when we catch and throw one ball at a time the number of possible juggling patterns is  well-known.  When we are allowed to catch and throw any number of balls at a given time (known as multiplex juggling) the enumeration is more difficult and has only been established in a few special cases.  We give a method of using cards related to ``embeddings'' of ordered partitions to enumerate the count of multiplex juggling sequences, determine these counts for small cases, and establish some combinatorial properties of the set of cards needed.
\end{abstract}

\section{Introduction}
While mathematics and juggling have existed independently for thousands of years, it has only been in the last thirty years that the mathematics of juggling has become a subject in its own right (for a general introduction see Polster \cite{P}).  Several different approaches for describing juggling patterns have been used.  The best-known method is siteswap which gives information what to do with the ball that is in your hand at the given moment, in particular how ``high'' you should throw the ball (see \cite{BEGW}).  For theoretical purposes a more useful method is to describe patterns by the use of cards.  This was first introduced in the work of Ehrenborg and Readdy \cite{ER}, and modified by Butler, Chung, Cummings and Graham \cite {BCCG}.

These cards work by focusing on looking at the \emph{relative order} of when the balls will land should we stop juggling at a given moment. Every throw then has the effect of changing the relative ordering of the balls. But we can only effect the order of a ball that is thrown; the remaining balls will still have the remaining relative order to each other. As a consequence if there are $b$ balls there are $b+1$ different things which can happen.  Namely, we don't throw a ball (the ``$+1$'') or we throw a ball so that it will land in some order relative to the other $b-1$ balls (which can be done in $b$ ways).  The four different cards for the case $b=3$ are shown in Figure~\ref{fig:BasicCards} (in all drawings of cards the circle at the bottom indicates the hand which either does not catch the ball at that ``beat'' or catches and throws effecting the relative ordering of the ball(s); we will always think of time moving from left to right).

\begin{figure}[htb!]
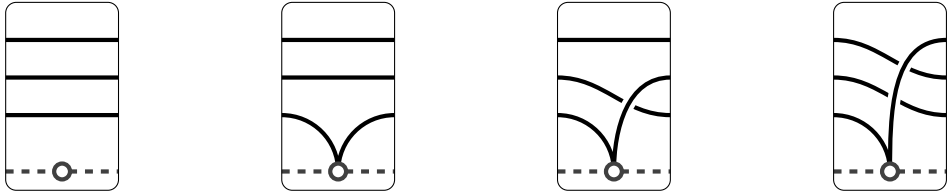

\centering
\picCa \hfil \picCb \hfil \picCc \hfil \picCd
\caption{Possible juggling cards for three balls.}
\label{fig:BasicCards}
\end{figure}

The advantage of working with cards is that the cards can work independently of each other, that is the choice of card to use at a given time is not dependent on the preceding choice of cards.  In siteswap the opposite is true in that you must know all preceding throws to determine which throws are possible.

Given a set of these $n$ cards for a given $b$ we can repeat these periodically to form a pattern.  Moreover, every possible siteswap with period $n$ and at most $b+1$ balls will occur as a unique combination of these cards (see \cite{BCCG,P}).  Therefore the number of different siteswap sequences of period $n$ for exactly $b$ balls is given by
\[
(b+1)^n-b^n.
\]
If we want to find all of the juggling patterns of \emph{minimal} period $n$ and using exactly $b$ balls we can then use M\"obius inversion and divide out by the period to get
\[
\frac1n\sum_{d\mid n}\mu({\textstyle\frac{n}{d}})\big((b+1)^d-b^d\big),
\]
where $\mu$ is the M\"obius function (see \cite{BEGW}).  (We will revisit this with more detail in Section~\ref{sec:counts}.)

For as long as there has been interest in the mathematics of juggling there has been interest in extending results to multiplex juggling (where more than one ball is allowed to be caught at a time).  In Ehrenborg and Readdy they produced possible cards for multiplex juggling which were a natural generalization.  Namely multiple balls could come down at a given time and would then be redistributed appropriately.  While these cards can describe every juggling pattern there is the problem that uniqueness is lost (see Figure~\ref{fig:nonunique} for an example of two consecutive cards describing the same pattern but using different cards).  So using these cards to count multiplex juggling patterns is not straight-forward.

\begin{figure}[htb!]
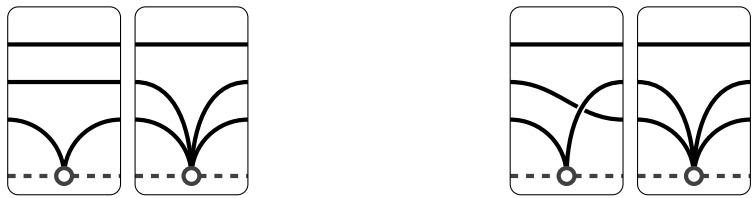

\centering
\picERc \picERa \hfil \picERd \picERa
\caption{Ambiguity arising from using cards of Ehrenborg and Readdy \cite{ER}.}
\label{fig:nonunique}
\end{figure}

One approach is to distinguish the balls which come down.  This is what was done in Butler, Chung, Cummings and Graham \cite{BCCG}, an example of such a card is shown in Figure~\ref{fig:separate}.  This avoids ambiguity that might arise but does not accurately reflect multiplex juggling in practice, but rather reflects passing patterns with multiple jugglers involved each juggler catching one ball (in particular the different points that come down correspond to the different jugglers).

\begin{figure}[htb!]
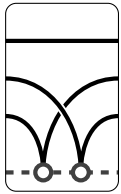

\centering
\picBCCGa
\caption{An example of a card used in Butler, Chung, Cummings and Graham \cite{BCCG}.}
\label{fig:separate}
\end{figure}

In this paper we will propose a new type of card which can be used for multiplex juggling.  It solves the ambiguity problem of Ehrenbrog and Readdy and also solves the modeling problem of Butler, Chung, Cummings and Graham.  However it does come at the mild cost of having a card being dependent on the \emph{previous} card which came before.  In Section~\ref{sec:cards} we will introduce these cards, and in Section~\ref{sec:graph} we show how to use matrices associated with a weighted graph to count the number of periodic patterns of length $n$.  We then count the number of siteswap sequences and the number of juggling patterns in Section~\ref{sec:properties} and Section~\ref{sec:counts}.  In Section~\ref{sec:hand} we will consider what happens when we limit the number of balls which can be thrown. We will give some concluding remarks in Section~\ref{sec:conclusion}, including a discussion of counting crossing numbers.

Most of the enumeration techniques here are fairly standard, it is their application to counting juggling patterns that is new.  We will also see that the objects generated in the process of deriving our count seem to have independent combinatorial interest.  Moreover, while our main goal has been to enumerate juggling patterns, the cards themselves might be useful for the exploration of other combinatorial aspects of juggling.

Finally, we note that while there has some been interest in counting multiplex juggling patterns, prior to this paper there has been little success.  Butler and Graham \cite{BG} made the most progress but their focus was on counting closed walks in a state graph and were not able to efficiently enumerate all juggling patterns.

\section{Cards for multiplex juggling}\label{sec:cards}
The way that cards describe juggling patterns is through understanding the relative order of their landing times.  The ambiguity that appeared in Figure~\ref{fig:nonunique} comes from the fact that two balls are landing \emph{together} but still being kept \emph{separate} in the ordering.  Since they are separate we could order them in two ways but that does not effect the pattern.  This suggests the following simple fix: tracks no longer represent individual balls, but rather groups of balls which will land together.   So now either the ``lowest'' group doesn't land, or the lowest group lands and the balls get thrown so that they are placed in new track(s) or added to the existing tracks.

Before each throw we will have an ordered partition of the number of balls $b$ on the left, i.e., $(q_1,q_2,\ldots,q_k)$ which corresponds to the statement that were we to stop juggling we would first have $q_1$ balls land at some point; then $q_2$ balls land some time later; and so on until finally $q_k$ balls land at the end.  (Note that we do not claim that they will land one right after the other; cards are keeping track of relative ordering of when things land and not the absolute times that they will land.)  Similarly after each throw we will have another ordered partition of $b$ on the right, i.e., $(r_1,r_2,\ldots,r_\ell)$.  (The number of our parts in our two partitions need not be the same but we must have $\ell\ge k-1$.)  If anything lands then the card in the middle indicates how the $q_1$ balls get redistributed.  Examples of these cards are shown in Figure~\ref{fig:multicards} where the first card corresponds to going from $(2,1,1)$ back to $(2,1,1)$ and the second corresponds to going from $(2,2,1)$ to $(1,2,2)$.

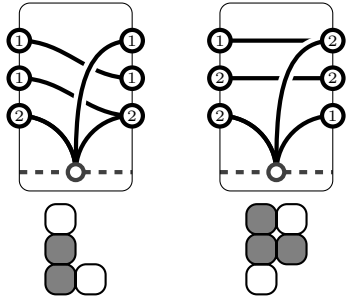
\begin{figure}[htb!]
\centering
\begin{tabular}{c@{\qquad}c}
\picJUGA & \picJUGB\\
\begin{tikzpicture}[scale=0.4]
\fill[color=black!50!white,rounded corners] (0,0) rectangle (1,1) (0,1) rectangle (1,2);
\draw[thick,rounded corners] (0,2) rectangle (1,3) (0,1) rectangle (1,2) (0,0) rectangle (1,1) (1,0) rectangle (2,1);
\end{tikzpicture}&
\begin{tikzpicture}[scale=0.4]
\fill[color=black!50!white,rounded corners] (0,1) rectangle (1,2) (1,1) rectangle (2,2) (0,2) rectangle (1,3);
\draw[thick, rounded corners] (0,0) rectangle (1,1) (0,1) rectangle (1,2) (1,1) rectangle (2,2) (0,2) rectangle (1,3) (1,2) rectangle (2,3);
\end{tikzpicture}
\end{tabular}
\caption{Cards which can be used to model multiplex juggling.}
\label{fig:multicards}
\end{figure}

\begin{definition}
An ordered partition $(q_1,q_2,\ldots,q_k)$ can be \emph{nontrivially embedded} into an ordered partition $(r_1,r_2,\ldots,r_\ell)$ if there exists indices $1\le i_2< i_3<\cdots< i_k\le\ell$ so that $q_j\le r_{i_j}$ for $2\le j\le k$.  Note that given two ordered partitions several nontrivial embeddings are possible.  An ordered partition $(q_1,q_2,\ldots,q_k)$ can be \emph{trivially embedded} into an ordered partition $(r_1,r_2,\ldots,r_\ell)$ if and only if  $(q_1,q_2,\ldots,q_k) = (r_1,r_2,\ldots,r_\ell)$.
\end{definition}

For every nontrivial embedding of $(q_1,q_2,\ldots,q_k)$, a partition of $b$, into $(r_1,r_2,\ldots,r_\ell)$, another partition of $b$, we have a card for multiplex juggling where a throw occurred.  As an example in Figure~\ref{fig:multicards} we have also marked underneath how $(q_1,q_2,\ldots,q_k)$ embeds into $(r_1,r_2,\ldots,r_\ell)$ by drawing the partition of $(r_1,r_2,\ldots,r_\ell)$ arranged from $r_1$ on the bottom to $r_\ell$ on the top and shading where $q_2,\ldots,q_k$ sits inside the partition.  Trivial embeddings, i.e.,  $(q_1,q_2,\ldots,q_k)=(r_1,r_2,\ldots,r_\ell)$,  correspond to no throws.  All possible cards (and corresponding embeddings of partition) for multiplex juggling when $b=3$ are shown in Figure~\ref{fig:3ballcards}.

\begin{figure}[h!]
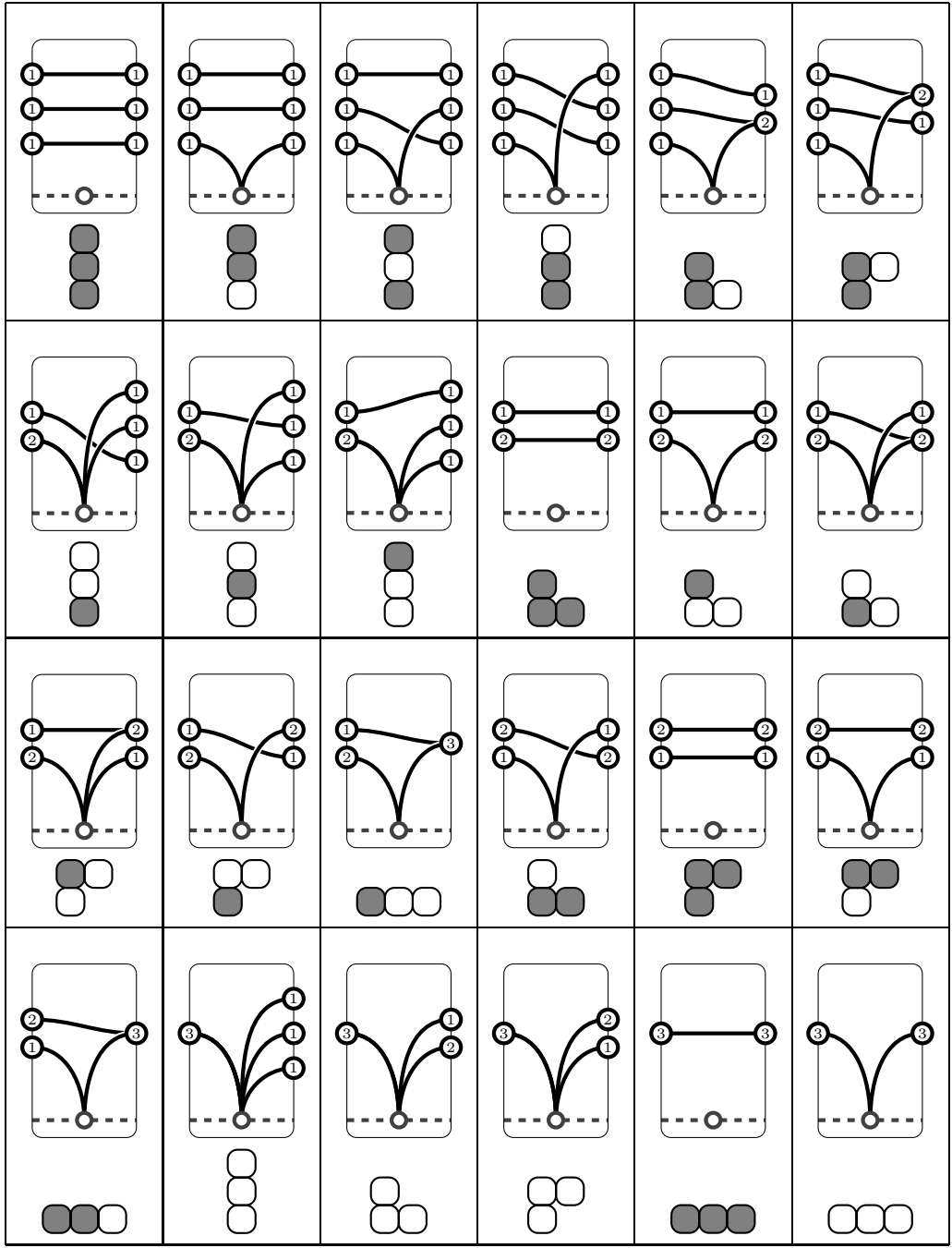

\centering
\begin{tabular}{|c|c|c|c|c|c|} 
\hline \vphantom{-}&&&&&\\
\picJPaa&\picJPab&\picJPac&\picJPad&\picJPae&\picJPaf\\
\partpicA&\partpicB&\partpicC&\partpicD&\partpicE&\partpicF\\ \hline \vphantom{-}&&&&&\\
\picEXg&\picEXf&\picEXe&\picJPbd&\picEXa&\picEXb\\
\partpicG&\partpicH&\partpicI&\partpicJ&\partpicK&\partpicL\\ \hline \vphantom{-}&&&&&\\
\picEXc&\picEXd&\picEXh&\picJPcd&\picJPce&\picJPcf\\
\partpicM&\partpicN&\partpicO&\partpicP&\partpicQ&\partpicR\\ \hline \vphantom{-}&&&&&\\
\picJPda&\picJPdb&\picJPdc&\picJPdd&\picJPde&\picJPdf\\
\partpicS&\partpicT&\partpicU&\partpicV&\partpicW&\partpicX\\ \hline
\end{tabular}
\caption{All cards and corresponding embeddings of ordered partitions for $b=3$.}
\label{fig:3ballcards}
\end{figure}

We can now determine the number of cards involved by examining their interpretation using embeddings of partitions.

\begin{lemma}\label{lem:right}
The total number of ways that an ordered partition $(r_1,r_2,\ldots,r_\ell)$ can have an ordered partition embedded inside is 
\[
\prod_i(r_i+1).
\]
Equivalently this is the number of cards where $(r_1,r_2,\ldots,r_\ell)$ is the ordered partition on the right.
\end{lemma}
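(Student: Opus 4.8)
The plan is to set up a bijection between the cards having $(r_1,r_2,\ldots,r_\ell)$ as their right partition and the tuples $(b_1,b_2,\ldots,b_\ell)$ of integers with $0\le b_i\le r_i$ for every $i$; since the number of such tuples is visibly $\prod_i(r_i+1)$, this proves the lemma. The intended meaning of $b_i$ is ``how many of the $r_i$ balls sitting in track $i$ on the right were carried over from the left rather than freshly thrown'': a value $b_i=0$ records that track $i$ receives no incoming group, and a value $b_i\ge1$ records that an incoming group of exactly $b_i$ balls lands in track $i$ (with the other $r_i-b_i$ balls freshly thrown).

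The map from a card to its tuple is the obvious one, so the work is in the reverse direction. First I would single out the unique tuple with $b_i=r_i$ for all $i$: since $\sum_i r_i=b$, this tuple leaves nothing to be thrown, and it should correspond to the trivial embedding $(q_1,\ldots,q_k)=(r_1,\ldots,r_\ell)$, i.e.\ the ``no throw'' card. For any other tuple we have $\sum_i b_i\le b-1$, and I would send it to the nontrivial embedding determined by setting $q_1:=b-\sum_i b_i\ge 1$, letting $(q_2,\ldots,q_k)$ be the subsequence of positive entries of $(b_1,\ldots,b_\ell)$ read from left to right, and letting $i_2<i_3<\cdots<i_k$ be the positions of those positive entries; then $q_j=b_{i_j}\le r_{i_j}$ as required in the definition, and $(q_1,q_2,\ldots,q_k)$ is a genuine ordered partition of $b$. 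Conversely a nontrivial embedding of $(q_1,\ldots,q_k)$ into $(r_1,\ldots,r_\ell)$ via indices $i_2<\cdots<i_k$ returns the tuple with $b_{i_j}=q_j$ and all other entries $0$, and one checks these two maps are mutually inverse. This yields $\prod_i(r_i+1)-1$ nontrivial cards, and together with the one trivial card we obtain $\prod_i(r_i+1)$.

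The one point that needs genuine care is the accounting in the last step: one must be sure that the ``number of ways'' being counted is the number of pairs consisting of a left ordered partition together with a choice of embedding indices — this is what the remark following the definition of embedding, and Figure~\ref{fig:3ballcards}, make precise, since the same two ordered partitions may admit several nontrivial embeddings — and that under this reading the condition $q_1\ge 1$, which is exactly what forces $(q_1,\ldots,q_k)$ to be a bona fide ordered partition of $b$ rather than one with a $0$ part, corresponds to deleting precisely the single ``full'' tuple $(r_1,\ldots,r_\ell)$. Once this is pinned down the rest is a direct unwinding of the definitions. As a sanity check I would confirm the count against Figure~\ref{fig:3ballcards}: for instance the right partition $(1,1,1)$ should, and does, appear on exactly $2^3=8$ of the pictured cards.
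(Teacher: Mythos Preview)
Your proof is correct and takes essentially the same approach as the paper: both arguments record, for each part $r_i$, the amount $b_i\in\{0,1,\ldots,r_i\}$ that the embedded partition occupies in that slot, and observe that these choices are independent. You are more explicit than the paper in separating out the single tuple $(r_1,\ldots,r_\ell)$ as the trivial (no-throw) card and verifying that every other tuple yields a bona fide nontrivial embedding with $q_1\ge 1$, but the underlying idea is identical.
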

\begin{proof}
We can think of shading the whole partition then removing parts of it as desired.  In particular for every part $r_i$ the embedding can use any of $0,1,\ldots,r_i$ in that slot.  In particular for the $i$th part there are $r_i+1$ choices and these can be made independently of each other.  Therefore there are $(r_1+1)(r_2+1)\cdots(r_\ell+1)$ possible embeddings.
\end{proof}

\begin{lemma}\label{lem:left}
The total number of ways that an ordered partition $(q_1,q_2,\ldots,q_k)$ can be embedded into an ordered partition is
\[
1+\frac{q_1+2k-2}{q_1+k-1}{q_1+k-1 \choose k-1}2^{q_1-1}.
\]
Equivalently this is the number of cards where $(q_1,q_2,\ldots,q_k)$ is the ordered partition on the left.
\end{lemma}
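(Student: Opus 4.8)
The plan is to split the count into the unique trivial embedding (the ``$1$'') and the nontrivial ones, and to show that the number of nontrivial embeddings of a left partition $(q_1,q_2,\ldots,q_k)$ depends only on $q_1$ and $k$, equaling a binomial sum that I then evaluate in closed form.

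\textbf{Reduction to a sum.} A nontrivial embedding is described by the following data: the $k-1$ parts $q_2,\ldots,q_k$ occur, in this order, as ``anchor'' parts of the right partition (possibly enlarged); the remaining $m\ge 0$ parts of the right partition are ``new''; and the $q_1$ thrown balls are distributed as nonnegative excesses $e_1,\ldots,e_{k-1}$ on the anchors (so anchor $j$ has size $q_j+e_j$) and positive amounts $n_1,\ldots,n_m$ on the new parts, subject to $\sum_j e_j+\sum_i n_i=q_1$. Since every $q_j\ge 1$, an anchor is a legitimate part even with zero excess, so this data carries no condition involving $q_2,\ldots,q_k$ beyond positivity. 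Interleaving the $m$ new parts among the $k$ gaps around the anchors can be done in $\binom{m+k-1}{k-1}$ ways, and the number of admissible tuples $(e_j),(n_i)$ is $\binom{q_1+k-2}{k+m-2}$ by stars and bars after replacing each $n_i$ by $n_i-1$. Hence the number of nontrivial embeddings is $\sum_{m\ge 0}\binom{m+k-1}{k-1}\binom{q_1+k-2}{q_1-m}$. (The degenerate case $k=1$ has no anchors; it reduces to counting compositions of $q_1$ and gives $2^{q_1-1}$, which the stated formula also returns, so it needs to be noted but causes no trouble.)

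\textbf{Evaluating the sum.} Using $\sum_m\binom{m+k-1}{k-1}x^m=(1-x)^{-k}$ and the fact that $\sum_j\binom{q_1+k-2}{j}x^j$ agrees with $(1+x)^{q_1+k-2}$ through degree $q_1$, the sum equals $[x^{q_1}]\,\dfrac{(1+x)^{q_1+k-2}}{(1-x)^{k}}$. Now write $1+x=(1-x)+2x$, so that $(1+x)^{q_1+k-2}=\sum_j\binom{q_1+k-2}{j}(2x)^j(1-x)^{q_1+k-2-j}$ and the coefficient we want becomes $\sum_j\binom{q_1+k-2}{j}2^j\,[x^{q_1-j}](1-x)^{q_1-2-j}$. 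The point is that $[x^{q_1-j}](1-x)^{q_1-2-j}$ vanishes unless $j\in\{q_1-1,q_1\}$ (for $j>q_1$ the exponent $q_1-j$ is negative; for $j<q_1-1$ the factor $(1-x)^{q_1-2-j}$ is a polynomial of degree strictly less than $q_1-j$), and in those two cases it equals $1$. This collapses the sum to $2^{q_1-1}\binom{q_1+k-2}{q_1-1}+2^{q_1}\binom{q_1+k-2}{q_1}$. Finally, factoring out $2^{q_1-1}$ and applying Pascal's rule gives $\binom{q_1+k-2}{q_1-1}+2\binom{q_1+k-2}{q_1}=\binom{q_1+k-1}{q_1}+\binom{q_1+k-2}{q_1}$, which rewrites as $\binom{q_1+k-1}{k-1}\bigl(1+\tfrac{k-1}{q_1+k-1}\bigr)=\tfrac{q_1+2k-2}{q_1+k-1}\binom{q_1+k-1}{k-1}$; adding back the trivial embedding yields the claimed total.

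\textbf{Where the difficulty lies.} The only genuinely delicate point is choosing the right algebraic move in the middle step: expanding with $1+x=(1-x)+2x$ makes all but two terms disappear, whereas the tempting alternative $1+x=2-(1-x)$ leaves an unwieldy $k$-term alternating sum. The combinatorial reduction of the first step also needs to be written carefully, both to justify that the count is independent of $q_2,\ldots,q_k$ and to accommodate the $k=1$ boundary case, but it is otherwise routine stars-and-bars bookkeeping.
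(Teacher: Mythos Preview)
Your combinatorial reduction to the sum $\sum_{m\ge 0}\binom{m+k-1}{k-1}\binom{q_1+k-2}{q_1-m}$ is exactly what the paper does (with $\ell$ in place of your $m$), including the same stars-and-bars justification and the same observation that the answer depends only on $q_1$ and $k$. Where you diverge is in evaluating that sum. The paper rewrites the summand via the identity
\[
\binom{k-1+\ell}{\ell}\binom{q_1+k-2}{q_1-\ell}=\frac{\ell+k-1}{q_1+k-1}\binom{q_1}{\ell}\binom{q_1+k-1}{k-1},
\]
which pulls a constant factor out of the sum and leaves only $\sum_\ell(\ell+k-1)\binom{q_1}{\ell}$; this splits into the two textbook identities $\sum_\ell\binom{q_1}{\ell}=2^{q_1}$ and $\sum_\ell\ell\binom{q_1}{\ell}=q_12^{q_1-1}$. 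Your generating-function route with the substitution $1+x=(1-x)+2x$ is correct and the collapse to two surviving terms is a nice touch, but the paper's method is more elementary and sidesteps the need to reason about coefficient extraction at negative exponents. Both arrive cleanly at the answer; the paper's is a bit more direct, yours a bit more mechanical once the right substitution is found.
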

\begin{proof}
Suppose that a throw happens, so $q_1$ balls come down and get redistributed, possibly adding balls to existing groups to land (i.e., adding to one of the $q_i$) or creating new groups to land.  Suppose that the ordered partition we embed into has $(k-1)+\ell$ different parts; where $0\le \ell\le q_1$.  This can happen in 
\[
{k-1+\ell\choose \ell}{k+q_1-2\choose q_1-\ell}=\frac{\ell+k-1}{q_1+k-1}{q_1\choose \ell}{q_1+k-1\choose k-1}
\]
different ways.  The ${k-1+\ell\choose \ell}$ divides the parts as coming from an existing part (e.g., $q_i$) or being newly created.  For the $\ell$ new parts we first need to get a contribution of $1$ coming from $q_1$ leaving $q_1-\ell$ available to distribute among the $k-1+\ell$ different parts arbitrarily which can be done in  ${k+q_1-2\choose q_1-\ell}$ ways.  Finally, we can perform some simple manipulation of binomial coefficients.

Putting this together we have that the total number of ways that $(q_1,q_2,\ldots,q_k)$ can be embedded into another ordered partition by a throw is
\begin{align*}
\sum_{\ell=0}^{q_1}\frac{\ell+k-1}{q_1+k-1}{q_1\choose \ell}{q_1+k-1\choose k-1}
&=\frac{1}{q_1+k-1}{q_1+k-1\choose k-1}\sum_{\ell=0}^{q_1}(\ell+k-1){q_1\choose \ell}\\
&=\frac{1}{q_1+k-1}{q_1+k-1\choose k-1}\bigg(\sum_{\ell=0}^{q_1}\ell{q_1\choose \ell}
+\sum_{\ell=0}^{q_1}(k-1){q_1\choose \ell}\bigg)\\
&=\frac{1}{q_1+k-1}{q_1+k-1\choose k-1}\big(q_12^{q_1-1}+(k-1)2^{q_1}\big)\\
\\
&=\frac{q_1+2k-2}{q_1+k-1}{q_1+k-1 \choose k-1}2^{q_1-1}.
\end{align*}
Combining this with the ``$+1$'' from the trivial embedding and the result follows.
\end{proof}

We can now determine the total number of cards, or equivalently the total number of embeddings possible.  Starting with $b=0$ the numbers are
\[
1,
~2,
~7,
~24,
~82,
~280,
~956,
~3264,
~11144,
~38048,
~129904,
~443520,
~1514272,\ldots.
\]
This is sequence {\tt A003480} in the OEIS \cite{OEIS} which is initiated with $a_0=1$, $a_1=2$ and $a_2=7$ and for $b\ge 3$ we have $a_b=4a_{b-1}-2a_{b-2}$.  Verifying the first few cases is straightforward (the case for $a_3=24$ is shown in Figure~\ref{fig:3ballcards}).  It remains to verify the recurrence.

\begin{theorem}\label{thm:cardrecurse}
If $a_b$ is the number of possible cards used for describing multiplex juggling with $b$ balls, then for $b\ge 3$ we have $a_b=4a_{b-1}-2a_{b-2}$.
\end{theorem}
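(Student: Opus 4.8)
The plan is to first turn the quantity $a_b$ into an explicit sum over compositions and then feed that into a generating function. By Lemma~\ref{lem:right}, for each fixed ordered partition $(r_1,\ldots,r_\ell)$ of $b$ there are exactly $\prod_i(r_i+1)$ cards having $(r_1,\ldots,r_\ell)$ as their right-hand ordered partition. Since every card has a unique right-hand ordered partition, summing over all ordered partitions (compositions) of $b$ accounts for every card exactly once, giving
\[
a_b=\sum_{(r_1,\ldots,r_\ell)\models b}\ \prod_{i=1}^{\ell}(r_i+1),
\]
where for $b=0$ the only composition is the empty one and the empty product is $1$, consistent with $a_0=1$.

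Next I would pass to the ordinary generating function $F(x)=\sum_{b\ge0}a_b x^b$. A composition of $b$ is just a finite ordered list of positive parts, and the weight $\prod_i(r_i+1)$ is multiplicative over the parts, so the standard sequence construction yields
\[
F(x)=\frac{1}{1-G(x)},\qquad G(x)=\sum_{r\ge1}(r+1)x^r.
\]
A short computation gives $G(x)=\dfrac{x}{(1-x)^2}+\dfrac{x}{1-x}=\dfrac{2x-x^2}{(1-x)^2}$, hence $1-G(x)=\dfrac{1-4x+2x^2}{(1-x)^2}$ and therefore
\[
F(x)=\frac{(1-x)^2}{1-4x+2x^2}.
\]

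Finally I would clear denominators to get $(1-4x+2x^2)F(x)=(1-x)^2=1-2x+x^2$ and compare coefficients of $x^b$ on both sides. For $b\ge3$ the right-hand side contributes nothing, so $a_b-4a_{b-1}+2a_{b-2}=0$, which is the asserted recurrence; the coefficients of $1,x,x^2$ recover the stated initial values $a_0=1$, $a_1=2$, $a_2=7$. I do not expect any real obstacle here: the only point that needs genuine care is the justification that $a_b$ equals the composition sum above (a bookkeeping consequence of Lemma~\ref{lem:right} plus the observation that the right-hand ordered partition is a complete invariant of a card), including the degenerate $b=0$ case. As a generating-function-free alternative, one can extract from the same composition sum the convolution identity $a_b=\sum_{j=1}^{b}(j+1)a_{b-j}$ by conditioning on the first part $r_1=j$, rewrite it as $a_b=3a_{b-1}+\sum_{m=0}^{b-2}a_m$ for $b\ge2$, and telescope once more to obtain $a_b=4a_{b-1}-2a_{b-2}$ for $b\ge3$.
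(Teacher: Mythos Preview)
Your argument is correct. Both you and the paper begin identically, using Lemma~\ref{lem:right} to write
\[
a_b=\sum_{(r_1,\ldots,r_\ell)\models b}\ \prod_{i=1}^{\ell}(r_i+1),
\]
and the paper also extracts the convolution $a_b=\sum_{j=1}^b(j+1)a_{b-j}$ by conditioning on the first part. From there the routes diverge: the paper stays combinatorial, forming the combination $a_b-2a_{b-1}+a_{b-2}$ directly from three instances of the convolution, reindexing, and observing that all but two terms cancel to leave $2a_{b-1}-a_{b-2}$. Your main approach instead packages the composition sum into the generating function $F(x)=(1-x)^2/(1-4x+2x^2)$ via the sequence construction, which is slicker and, as you note, delivers the initial values $a_0,a_1,a_2$ at the same time. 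Your ``alternative'' at the end is essentially the paper's argument, organized as two successive first differences rather than one second difference.

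One small wording slip: the parenthetical ``the right-hand ordered partition is a complete invariant of a card'' is not what you mean---that would assert that no two distinct cards share a right-hand partition, which is false. What you actually need (and correctly state two lines earlier) is only that every card has a well-defined right-hand partition, so grouping cards by that partition and invoking Lemma~\ref{lem:right} counts each card exactly once.
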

\begin{proof}
We can count the number of cards by breaking our count up according to the ordered partition that shows up on the right side using Lemma~\ref{lem:right}.  In particular we have
\begin{equation}\label{eq:ab}
a_b=\sum_{(r_1,r_2,\ldots)\in\mathcal{R}_b}\prod_{i\ge1}(r_i+1),
\end{equation}
where $\mathcal{R}_b$ are all of the ordered partitions of $b$.  

We now further break up this count by combining the ordered partitions according to the size of the first part which can be anything from $1$ to $b$.  So we have
\begin{align*}
a_b&=\sum_{j=1}^b\bigg(\sum_{(j,r_2,\ldots)\in\mathcal{R}_b}(j+1)\prod_{i\ge 2}(r_i+1)\bigg)\\
&=\sum_{j=1}^b(j+1)\bigg(\sum_{(r_2,\ldots)\in\mathcal{R}_{b-j}}\prod_{i\ge 2}(r_i+1)\bigg)\\
&=\sum_{j=1}^b(j+1)a_{b-i},
\end{align*}
where in the last step we note that we have the form of \eqref{eq:ab}.

To finish we have
\begin{align*}
a_b-2a_{b-1}+a_{b-2}&=
\sum_{j=1}^b(j+1)a_{b-j} 
-2\sum_{j=1}^{b-1}(j+1)a_{b-1-j}
+\sum_{j=1}^{b-2}(j+1)a_{b-2-j}\\
&=\sum_{k=1}^b(k+1)a_{b-k} 
-2\sum_{k=2}^{b}ka_{b-k}
+\sum_{k=3}^{b}(k-1)a_{b-k}\\
&=2a_{b-1}-a_{b-2}.
\end{align*}
Here we reindex the three sums to be consistent and then note that all but the first two terms will drop out.  Rearranging we conclude $a_b=4a_{b-1}-2a_{b-2}$, as desired.
\end{proof}

\section{Combining cards by taking walks in a graph}\label{sec:graph}

The advantage to using cards in order to describe juggling patterns was the ability to have our current card be chosen independently of all other cards.  With these new cards that we will use for multiplex juggling we now have to be careful in that the choice of our current card is dependent on the previous card, namely the partition on the right of the previous card must match the partition on the left of the current card.  Moreover if we are forming a pattern with period $n$ we need to make sure that our last card will also be consistent with our first card (so that we can repeat).

To help achieve this we will construct a directed multi-graph $\mathcal{G}_b$ by letting the vertices of $\mathcal{G}_b$ be the ordered partitions of $b$; and for each card we add a directed edge from the ordered partition on the left of the card to the ordered partition on the right of the card.

\begin{observation}\label{obs:bijection}
There is a one-to-one correspondence between periodic sequences using $n$ cards and closed walks of length $n$ in the graph.  In particular, to count the number of periodic sequences using $n$ cards it suffices to count the number of closed walks of length $n$.
\end{observation}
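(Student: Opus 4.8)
The plan is to observe that, once definitions are unwound, both objects being counted are sequences of cards subject to exactly the same ``chaining'' condition, so the correspondence is essentially tautological. First I would make precise what a \emph{periodic sequence using $n$ cards} is: an ordered $n$-tuple $(C_1,C_2,\ldots,C_n)$ of cards for $b$ balls such that for every index $i$, read cyclically modulo $n$, the ordered partition appearing on the right of $C_i$ equals the ordered partition appearing on the left of $C_{i+1}$. As developed in Section~\ref{sec:cards}, this matching is precisely what is needed to stack the cards left-to-right and then wrap around: the partition on the right of a card records the relative landing order of the $b$ balls immediately after a beat, which is exactly the state that must be fed as input (the partition on the left) to the next card; conversely, any compatible cyclic stack of such cards describes a genuine periodic multiplex juggling pattern of period $n$ on $b$ balls. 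So ``periodic sequence using $n$ cards'' is exactly this combinatorial chaining condition.

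Next I would recall the construction of $\mathcal{G}_b$: its vertices are the ordered partitions of $b$, and its edges are in bijection with the cards, the edge attached to a card $C$ running from the ordered partition on the left of $C$ (its tail) to the ordered partition on the right of $C$ (its head). A closed walk of length $n$ in this directed multigraph is, by definition, an ordered $n$-tuple of edges $(e_1,e_2,\ldots,e_n)$ with $\mathrm{head}(e_i)=\mathrm{tail}(e_{i+1})$ for all $i$ modulo $n$.

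The correspondence is then immediate: send $(C_1,\ldots,C_n)$ to $(e_1,\ldots,e_n)$, where $e_i$ is the edge of $\mathcal{G}_b$ associated with the card $C_i$. Because the card-to-edge map is a bijection carrying the left partition of a card to the tail of its edge and the right partition to the head, the chaining condition defining a periodic card sequence, $\mathrm{right}(C_i)=\mathrm{left}(C_{i+1})$, translates verbatim into the closed-walk condition $\mathrm{head}(e_i)=\mathrm{tail}(e_{i+1})$. Hence the map is well defined and has an obvious two-sided inverse (replace each edge by the card it came from), so it is a bijection; in particular the number of periodic sequences using $n$ cards equals the number of closed walks of length $n$ in $\mathcal{G}_b$.

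The only point requiring care — and the sole ``obstacle'' — is not in the bijection itself but in justifying that the purely combinatorial chaining condition really is equivalent to the cyclic stack of cards describing an honest multiplex juggling pattern on $b$ balls; but this equivalence is exactly what the card formalism of Section~\ref{sec:cards} was designed to guarantee, so I would invoke that construction rather than reprove it here.
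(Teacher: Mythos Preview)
Your proposal is correct and follows essentially the same approach as the paper: the paper's justification is simply the brief remark that each card is an edge, consecutive cards give consecutive edges, and periodicity forces the walk to be closed. Your write-up unwinds the definitions more carefully, but the underlying argument is identical.
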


This follows by noting that each card is an edge and if two cards are used sequentially then the edges also occur sequentially in the graph, giving the correspondence between sequences of consecutive cards and walks in the graph.  Moreover the fact that we can repeat the pattern periodically indicates that we must return to the same ordered partition that we started with, so the walk is closed.

We now can use the transfer matrix method (see Stanley \cite[Ch.\ 4.7]{S}).

\begin{theorem}\label{thm:walks}
Given a directed multi-graph $\mathcal{G}$ let $A$ be the matrix with rows and columns indexed by the vertices with $A_{uv}$ equal to the number of directed arcs from $u\to v$.  Then $\trace(A^n)$ equals the number of closed walks of length $n$ in the graph.
\end{theorem}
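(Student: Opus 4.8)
The plan is to prove the slightly more refined bookkeeping statement that for every $n\ge 1$ and every ordered pair of vertices $u,v$ the entry $(A^n)_{uv}$ equals the number of walks of length $n$ in $\mathcal{G}$ that start at $u$ and end at $v$. The theorem then drops out immediately by summing over the diagonal, since a closed walk of length $n$ is precisely a walk of length $n$ whose initial and terminal vertices agree, so the total count is $\sum_u (A^n)_{uu}=\trace(A^n)$.

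First I would set up the induction on $n$. The base case $n=1$ is just the defining property of $A$: a walk of length $1$ from $u$ to $v$ is a choice of a directed arc $u\to v$, and by hypothesis there are exactly $A_{uv}$ of these. For the inductive step I would invoke the matrix identity $(A^n)_{uv}=\sum_{w}(A^{n-1})_{uw}A_{wv}$ and match it with the obvious decomposition of a walk: any walk of length $n$ from $u$ to $v$ is uniquely a walk of length $n-1$ from $u$ to some intermediate vertex $w$, followed by a single arc from $w$ to $v$; conversely any such pair glues to a walk of length $n$, and letting $w$ range over all vertices enumerates every length-$n$ walk exactly once. By the inductive hypothesis the first segment can be chosen in $(A^{n-1})_{uw}$ ways and the last arc in $A_{wv}$ ways, and these choices are independent, so the number of length-$n$ walks from $u$ to $v$ is $\sum_w (A^{n-1})_{uw}A_{wv}=(A^n)_{uv}$. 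This closes the induction, and taking the trace finishes the proof.

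There is no genuine obstacle here; the proof is the standard transfer-matrix argument of \cite[Ch.~4.7]{S}. The only points that warrant a word of care are that the multigraph may have parallel arcs, so walks must be counted with the multiplicity contributed by the number of arcs traversed at each step — but this is exactly what multiplication of nonnegative integer matrices records — and that the sum over the intermediate vertex $w$ in the inductive step partitions the set of length-$n$ walks without omission or overlap, which is what makes the decomposition a bijection rather than merely an injection.
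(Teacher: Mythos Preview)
Your proof is correct and is exactly the standard transfer-matrix argument. The paper does not actually supply its own proof of this theorem; it simply states the result and cites Stanley \cite[Ch.~4.7]{S}, so your write-up fills in precisely what the paper leaves to that reference.
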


Let $A_b$ be the matrix associated with the graph $\mathcal{G}_b$.  We have $A_0=(1)$, $A_1=(2)$,
\[
A_2=\begin{array}{r}(2)\\(1,1)\end{array}\left(\begin{array}{cc}
2 & 1 \\
1 & 3
\end{array}\right),\qquad
A_3=
\begin{array}{r}(3)\\(2,1)\\(1,2)\\(1,1,1)\end{array}\left(\begin{array}{cccc}
2 & 1 & 1 & 1 \\
1 & 3 & 2 & 3 \\
1 & 1 & 2 & 0 \\
0 & 1 & 1 & 4
\end{array}\right),
\]
and
\[
A_4=       	
\begin{array}{r}(4)\\(3,1)\\(1, 3)\\(2, 1, 1)\\(2, 2)\\(1, 2, 1)\\(1, 1, 2)\\(1, 1, 1, 1)\end{array}\left(\begin{array}{cccccccc}
2 & 1 & 1 & 1 & 1 & 1 & 1 & 1 \\
1 & 3 & 2 & 3 & 2 & 3 & 3 & 4 \\
1 & 1 & 2 & 0 & 0 & 0 & 0 & 0 \\
0 & 1 & 1 & 4 & 1 & 3 & 3 & 6 \\
1 & 1 & 1 & 1 & 3 & 1 & 1 & 0 \\
0 & 1 & 0 & 2 & 1 & 2 & 0 & 0 \\
0 & 0 & 1 & 0 & 1 & 1 & 3 & 0 \\
0 & 0 & 0 & 1 & 0 & 1 & 1 & 5

\end{array}\right),
\]
where on the left we have marked the ordered partitions that correspond to the vertex.  For reference we also give the graphs $\mathcal{G}_0$, $\mathcal{G}_1$, $\mathcal{G}_2$, $\mathcal{G}_3$ in Figure~\ref{fig:graphs}.

\begin{figure}[htb!]
\centering
\begin{tabular}{c@{\qquad\qquad}c@{\qquad\qquad}c@{\qquad\qquad}c}
\begin{tikzpicture}
\node[draw, rectangle, rounded corners, inner sep=2pt] (a) at (0,0) {$\emptyset$};
\draw[-{>[scale=1.25]},thick,shorten >=2pt,shorten <=2pt, rounded corners] (a) -- (0.75,0)--(0.75,0.75) node[circle,draw,thick, fill=white, inner sep = 1pt] {\small $1$} --(0,0.75)--(a);
\end{tikzpicture}

&

\begin{tikzpicture}
\node[draw, rectangle, rounded corners, inner sep=2pt] (a) at (0,0) {$(1)$};
\draw[-{>[scale=1.25]},thick,shorten >=2pt,shorten <=2pt, rounded corners] (a) -- (0.75,0)--(0.75,0.75) node[circle,draw,thick, fill=white, inner sep = 1pt] {\small $2$} --(0,0.75)--(a);
\end{tikzpicture}

&

\begin{tikzpicture}
\node[draw, rectangle, rounded corners, inner sep=2pt] (a) at (0,0) {$(1,1)$};
\node[draw, rectangle, rounded corners, inner sep=2pt] (b) at (0,2) {$(2)$};
\draw[-{>[scale=1.25]},thick,shorten >=2pt,shorten <=2pt, rounded corners] (a) -- (0.75,0)--(0.75,-0.75) node[circle,draw,thick, fill=white, inner sep = 1pt] {\small $3$} --(0,-0.75)--(a);
\draw[-{>[scale=1.25]},thick,shorten >=2pt,shorten <=2pt, rounded corners] (b) -- (0.75,2)--(0.75,2.75) node[circle,draw,thick, fill=white, inner sep = 1pt] {\small $2$} --(0,2.75)--(b);
\draw[-{>[scale=1.25]},thick,shorten >=2pt,shorten <=2pt, rounded corners] (b) to (0.25,1) node[circle,draw,thick, fill=white, inner sep = 1pt] {\small $1$} to (a);
\draw[-{>[scale=1.25]},thick,shorten >=2pt,shorten <=2pt, rounded corners] (a) to (-0.25,1) node[circle,draw,thick, fill=white, inner sep = 1pt] {\small $1$} to (b);
\end{tikzpicture}
&
\begin{tikzpicture}
\node[draw, rectangle, rounded corners, inner sep=2pt] (a) at (0,0) {$(3)$};
\node[draw, rectangle, rounded corners, inner sep=2pt] (b) at (3,0) {$(1,2)$};
\node[draw, rectangle, rounded corners, inner sep=2pt] (c) at (3,3) {$(1,1,1)$};
\node[draw, rectangle, rounded corners, inner sep=2pt] (d) at (0,3) {$(2,1)$};
\draw[-{>[scale=1.25]},thick,shorten >=2pt,shorten <=2pt, rounded corners] (a) -- (-0.75,0)--(-0.75,-0.75) node[circle,draw,thick, fill=white, inner sep = 1pt] {\small $2$} --(0,-0.75)--(a);
\draw[-{>[scale=1.25]},thick,shorten >=2pt,shorten <=2pt, rounded corners] (d) -- (-0.75,3)--(-0.75,3.75) node[circle,draw,thick, fill=white, inner sep = 1pt] {\small $3$} --(0,3.75)--(d);
\draw[-{>[scale=1.25]},thick,shorten >=2pt,shorten <=2pt, rounded corners] (b) -- (3.75,0)--(3.75,-0.75) node[circle,draw,thick, fill=white, inner sep = 1pt] {\small $2$} --(3,-0.75)--(b);
\draw[-{>[scale=1.25]},thick,shorten >=2pt,shorten <=2pt, rounded corners] (c) -- (4,3)--(4,3.75) node[circle,draw,thick, fill=white, inner sep = 1pt] {\small $4$} --(3,3.75)--(c);
\draw[-{>[scale=1.25]},thick,shorten >=2pt,shorten <=2pt, rounded corners] 
    (a) to (1.5,0.3) node[circle,draw,thick, fill=white, inner sep = 1pt] {\small $1$} to (b);
\draw[-{>[scale=1.25]},thick,shorten >=2pt,shorten <=2pt, rounded corners] 
    (b) to (1.5,-0.3) node[circle,draw,thick, fill=white, inner sep = 1pt] {\small $1$} to (a);
\draw[-{>[scale=1.25]},thick,shorten >=2pt,shorten <=2pt, rounded corners] 
    (a) to (-0.3,1.5) node[circle,draw,thick, fill=white, inner sep = 1pt] {\small $1$} to (d);
\draw[-{>[scale=1.25]},thick,shorten >=2pt,shorten <=2pt, rounded corners] 
    (d) to (0.3,1.5) node[circle,draw,thick, fill=white, inner sep = 1pt] {\small $1$} to (a);
\draw[-{>[scale=1.25]},thick,shorten >=2pt,shorten <=2pt, rounded corners] 
    (a) to (1.5,1.5) node[circle,draw,thick, fill=white, inner sep = 1pt] {\small $1$} to (c);
\draw[-{>[scale=1.25]},thick,shorten >=2pt,shorten <=2pt, rounded corners] 
    (c) to (3,1.5) node[circle,draw,thick, fill=white, inner sep = 1pt] {\small $1$} to (b);
\draw[-{>[scale=1.25]},thick,shorten >=2pt,shorten <=2pt, rounded corners] 
    (d) to (1.5,3.3) node[circle,draw,thick, fill=white, inner sep = 1pt] {\small $3$} to (c);
\draw[-{>[scale=1.25]},thick,shorten >=2pt,shorten <=2pt, rounded corners] 
    (c) to (1.5,2.7) node[circle,draw,thick, fill=white, inner sep = 1pt] {\small $1$} to (d);
\draw[-{>[scale=1.25]},thick,shorten >=2pt,shorten <=2pt, rounded corners] 
    (d) to (1.5,2.1) node[circle,draw,thick, fill=white, inner sep = 1pt] {\small $2$} to (b);
\draw[-{>[scale=1.25]},thick,shorten >=2pt,shorten <=2pt, rounded corners] 
    (b) to (1.5,0.9) node[circle,draw,thick, fill=white, inner sep = 1pt] {\small $1$} to (d);
\end{tikzpicture}
\\
$\mathcal{G}_0$& $\mathcal{G}_1$& $\mathcal{G}_2$& $\mathcal{G}_3$
\end{tabular}
\caption{The graphs $\mathcal{G}_0$, $\mathcal{G}_1$, $\mathcal{G}_2$ and $\mathcal{G}_3$.}
\label{fig:graphs}
\end{figure}
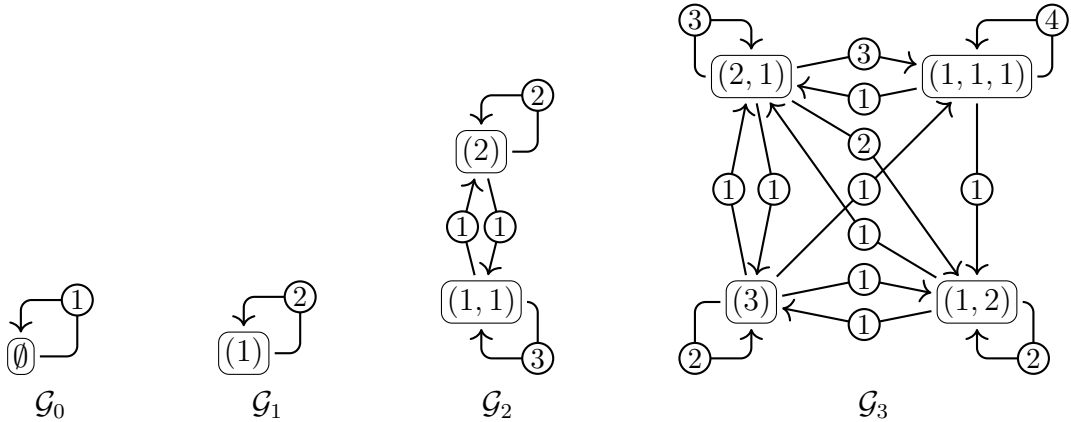

We note that Lemma~\ref{lem:right} can be used to determine the \emph{column} sums of $A_b$, while Lemma~\ref{lem:left} can be used to determine the \emph{row} sums of $A_b$.  Using Theorem~\ref{thm:walks} we also get the sum of all entries of the matrix.

\section{Counting multiplex siteswaps}\label{sec:properties}

We can combine Observation~\ref{obs:bijection} together with Theorem~\ref{thm:walks} to find the number of periodic sequences of $n$ cards.  For each sequence of cards we will get a siteswap pattern (recall that siteswap patterns work by recording how many beats in the future the ball(s) land which can be done by following the path along the figure formed by the cards until it goes back down).  However for a given siteswap there can be \emph{multiple} ways in which it can be represented using the cards.

The problem lies in what happens with balls that are never used.  For instance in Figure~\ref{fig:samesiteswap} we see two distinct sets of three cards which correspond to the same siteswap, the difference between them being the tracks in the unused balls.  (In siteswap notation this will be 1[112][22].)

\begin{figure}[htb!]
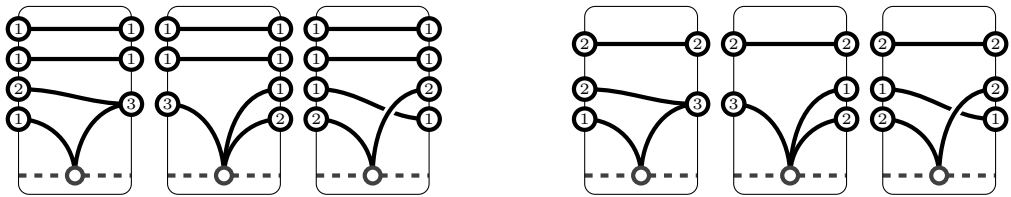

\centering
\picWeda \picWedb \picWedc
\hfil
\picWedd \picWede \picWedf
\caption{Two distinct sets of three cards with the same siteswap (1[112][22]).}
\label{fig:samesiteswap}
\end{figure}

Note that for normal juggling the unused balls all have the same behavior, in our language they would correspond to lying in tracks with capacity $1$.  So to count the number of siteswaps we take the difference of the ways to place $n$ cards with $b$ tracks $(b+1)^n$ and the ways to place $n$ cards with $b-1$ tracks $b^n$ (which is in bijection with the number of ways to place $n$ cards with $b$ tracks and the top track is never used).  This gives $(b+1)^n-b^n$.  We will do something similar with our cards that we are using for multiplex juggling.

\begin{theorem}\label{thm:siteswapcount}
Let $ss_b(n)$ be the number of siteswap patterns using exactly $b$ balls and with period $n$.  Then
\[
ss_b(n) = \trace(A_b^n)-\sum_{i=0}^{b-1}\trace(A_i^n).
\]
\end{theorem}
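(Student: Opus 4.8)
The plan is to mimic the ordinary-juggling computation recalled just before the statement, with $\trace(A_b^n)$ and $\trace(A_i^n)$ playing the roles of $(b+1)^n$ and $b^n$; the extra summands reflect precisely the ambiguity illustrated in Figure~\ref{fig:samesiteswap}, where one siteswap is realized by several card sequences once unused balls are present. First I would set up the bookkeeping: by Observation~\ref{obs:bijection} and Theorem~\ref{thm:walks}, $\trace(A_b^n)$ is the number of periodic sequences of $n$ cards drawn from the $b$-ball cards, i.e.\ the number of closed walks of length $n$ in $\mathcal{G}_b$; write $\mathcal{W}_b(n)$ for this set. Each card sequence produces a siteswap by following every thrown ball along the figure until it next lands; call this map $\Phi$. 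I would check that $\Phi$ is well defined on periodic sequences and that its image is exactly the set of period-$n$ siteswaps using \emph{at most} $b$ balls.

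The heart of the proof is a single structural lemma: a fixed siteswap using exactly $j$ balls is the image under $\Phi$ of exactly $c(b-j)$ elements of $\mathcal{W}_b(n)$, where $c(m)$ is the number of ordered partitions of $m$ (so $c(0)=1$ and $c(m)=2^{m-1}$ for $m\ge1$). I would argue in three steps. (1) \emph{On $j$ tracks the representation is unique:} for a siteswap using exactly $j$ balls the ordered partition before each beat (the relative landing order of the $j$ balls in flight) is forced by the pattern, and then the card at that beat---how the balls of the bottom group are redistributed---is forced by the prescribed throw heights, so exactly one $j$-ball card sequence maps to it. (2) \emph{The extra balls are ``parked'' on top:} in a $b$-ball card sequence whose siteswap uses $j<b$ balls, the $b-j$ balls that are never thrown during the period must, at every beat, occupy the groups with the latest landing times, lying entirely above every thrown (``active'') ball---for an active ball sitting above a never-thrown ball could never reach the bottom group to be thrown. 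Deleting these $b-j$ balls recovers the unique $j$-ball representation of (1), and their internal grouping is an arbitrary ordered partition of $b-j$ which is never disturbed, since active throws reach only the first $j$ relative positions. (3) \emph{Every such configuration is legal:} conversely, attaching any ordered partition of $b-j$ on top of the unique $j$-ball card sequence and leaving it untouched throughout yields a genuine closed walk in $\mathcal{G}_b$, and distinct ordered partitions give distinct walks; this produces the count $c(b-j)$.

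Granting the lemma, the rest is a short computation. Summing over the ball count $j$,
\[
\trace(A_b^n)=\sum_{j=0}^{b}c(b-j)\,ss_j(n),
\]
and since $\sum_{m=0}^{M}c(m)=2^{M}$ we get $\sum_{i=0}^{b-1}\trace(A_i^n)=\sum_{j=0}^{b-1}\bigl(\sum_{m=0}^{b-1-j}c(m)\bigr)ss_j(n)=\sum_{j=0}^{b-1}2^{\,b-1-j}ss_j(n)$, which is exactly the $j\le b-1$ part of the displayed identity (as $c(b-j)=2^{\,b-j-1}$ there). Subtracting leaves only the $j=b$ term $c(0)\,ss_b(n)=ss_b(n)$, which is the claim. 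Equivalently, one may package step~(3) together with this computation as an explicit bijection between the elements of $\mathcal{W}_b(n)$ whose siteswap uses fewer than $b$ balls and $\bigsqcup_{i=0}^{b-1}\mathcal{W}_i(n)$, obtained by re-assigning which balls are regarded as parked.

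The delicate part should be step~(2): one must show that a never-thrown ball is never ``sandwiched'' among active balls and is never overtaken by a high throw, so that ``the parked balls sit on top'' is a theorem and not just a convenient picture. This is where the distinction between relative order and absolute landing times stressed in Section~\ref{sec:cards} really has to be used---in particular, to see that the relative position of a parked group returns to its starting value over a full period precisely because the total number of groups does. Once~(2) is secured, the uniqueness in~(1), the legality in~(3), and the reduction to closed walks via Observation~\ref{obs:bijection} and Theorem~\ref{thm:walks} are all routine.
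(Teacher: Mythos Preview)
Your argument is correct and follows essentially the same approach as the paper's: both hinge on the identity $\trace(A_b^n) = ss_b(n) + \sum_{i=0}^{b-1} 2^{b-i-1} ss_i(n)$, justified by exactly the ``parked balls sit on top in an arbitrary ordered partition of $b-j$'' combinatorics you spell out in steps~(1)--(3). The only differences are cosmetic---the paper inverts this identity by induction on $b$ whereas you do it by the direct telescoping $\sum_{m=0}^{M}c(m)=2^{M}$, and you are more explicit than the paper about why the never-thrown balls must occupy the top tracks (the paper simply asserts that one ``adds $b-i$ balls to the top of each card'').
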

\begin{proof}
We proceed by induction on $b$.  For the base case of $b=0$ there is one card (the empty card) and so for any length $n$ there is exactly one way to position these cards.  (In siteswap notation this is 00\ldots0.)  Since $A_0=(1)$ we have $\trace(A_0^n)=1$ establishing the base case.

Now assume that it works up through $b-1$ and consider the case for $b$.  First we observe
\[
\trace(A_{b}^n) = ss_{b}(n)+\sum_{i=0}^{b-1}2^{b-i-1}ss_i(n).
\]
This follows by noting that for every siteswap which uses exactly $i$ balls we can find a sequence of cards corresponding to the ordered partitions of $i$.  We can now add $b-i$ balls to the top of each card as long as we do it consistently across the different cards.  Moreover the number of different options we have to place the $b-i$ balls equals the number of ordered partitions of $b-i$ which is $2^{b-i-1}$.

Rearranging and using our induction hypothesis we have
\begin{align*}
ss_b(n)
&=\trace(A_{b}^n)-\sum_{i=0}^{b-1}2^{b-i-1}ss_i(n)\\
&=\trace(A_{b}^n)-\sum_{i=0}^{b-1}2^{b-i-1}\bigg(\trace(A_i^n)-\sum_{j=0}^{i-1}\trace(A_j^n)\bigg)\\
&=\trace(A_{b}^n)-\sum_{i=0}^{b-1}\trace(A_i^n)\bigg(2^{b-i-1}-\sum_{k=0}^{b-i-2}2^k\bigg)\\
&=\trace(A_{b}^n)-\sum_{i=0}^{b-1}\trace(A_i^n).
\end{align*}
Establishing the result.
\end{proof}

\section{Counting multiplex juggling patterns}\label{sec:counts}
To go from counting siteswap patterns of period $n$ to counting juggling patterns of minimal period $n$ we want to do two things.  First we want to remove any pattern that has shorter period (suppose $d$ divides $n$, then any periodic sequence of cards of length $d$ can be repeated $n/d$ times to make a periodic sequence of cards of length $n$).  Second we want to divide out by $n$ since in juggling patterns there is no set start point, i.e., 1[112][22], [112][22]1, and [22]1[112] all correspond to the same juggling pattern.

For the first issue we can use M\"obius inversion (see \cite{S}).  Namely we note that if $ms_b(n)$ is the number of siteswap patterns with $b$ balls and minimal period $n$, then
\[
ss_b(n) = \sum_{d \mid n}ms_b(d).
\]
So if we let $\mu(\frac{n}{d})$ be the M\"obius function it follows
\[
ms_b(n) = \sum_{d \mid n}\mu({\textstyle\frac{n}{d}})ss_b(d).
\]
With this in hand we can now divide out by the rotational symmetry of the starting point and determine the number of juggling patterns with $b$ balls and period $n$ which we denote $jp_b(n)$.  Combining the above with Theorem~\ref{thm:siteswapcount} we get the following.

\begin{theorem}\label{thm:jpcount}
The number of juggling patterns with $b$ balls and minimal period $n$ is
\begin{equation*}
jp_b(n)=\frac1n\sum_{d\mid n}\mu({\textstyle\frac{n}{d}})\bigg(\trace(A_b^d)-\sum_{i=0}^{b-1}\trace(A_i^d)\bigg).
\end{equation*}
\end{theorem}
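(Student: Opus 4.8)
The plan is to assemble the formula from three ingredients that are already in place: Theorem~\ref{thm:siteswapcount}, M\"obius inversion, and the orbit-counting principle for the cyclic rotation action. All three are sketched in the paragraph preceding the statement, so the real work is just to make the rotation step precise.

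First I would fix terminology: a juggling pattern with $b$ balls and minimal period $n$ is an equivalence class of siteswap patterns (with exactly $b$ balls) of minimal period $n$ under the cyclic group $C_n=\mathbb{Z}/n\mathbb{Z}$ acting by rotating the starting beat; here I write $ms_b(n)$ for the number of siteswap patterns with $b$ balls whose minimal period is exactly $n$. The key point is that $C_n$ acts \emph{freely} on this set: if rotation by $k$ with $0<k<n$ fixed such a pattern, then $k$ would be a period of the pattern, and since $n$ is always a period, $\gcd(k,n)$ would also be a period; but $\gcd(k,n)\le k<n$ contradicts minimality of the period. By the orbit--stabilizer theorem every orbit then has size exactly $|C_n|=n$, so the number of orbits is $jp_b(n)=\tfrac1n ms_b(n)$. (I would also note in passing that "minimal period'' is genuinely an invariant of the rotation class, since rotating a periodic sequence does not change its set of periods, so $jp_b(n)$ is well defined and dividing by $n$ is legitimate.)

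Next I would recover $ms_b$ from $ss_b$. Every siteswap pattern of period $n$ has a well-defined minimal period $d$, which divides $n$, and conversely a siteswap of minimal period $d\mid n$ occurs exactly once among the period-$n$ patterns (repeat its length-$d$ block $n/d$ times). Hence $ss_b(n)=\sum_{d\mid n}ms_b(d)$, and M\"obius inversion gives $ms_b(n)=\sum_{d\mid n}\mu(\tfrac nd)ss_b(d)$. Substituting Theorem~\ref{thm:siteswapcount}, namely $ss_b(d)=\trace(A_b^d)-\sum_{i=0}^{b-1}\trace(A_i^d)$, into $jp_b(n)=\tfrac1n\sum_{d\mid n}\mu(\tfrac nd)ss_b(d)$ yields the claimed formula.

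The only genuinely delicate point is the freeness of the $C_n$-action on minimal-period-$n$ patterns; once that is granted, everything else is routine divisor bookkeeping and an appeal to Theorem~\ref{thm:siteswapcount}. I therefore expect the write-up to spend one sentence on each of the M\"obius and substitution steps and a short paragraph on the orbit argument.
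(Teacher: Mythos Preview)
Your proposal is correct and follows exactly the approach the paper uses: the paper's ``proof'' is just the paragraph preceding the theorem, which sets up $ss_b(n)=\sum_{d\mid n}ms_b(d)$, applies M\"obius inversion, divides by $n$ for rotational symmetry, and substitutes Theorem~\ref{thm:siteswapcount}. Your write-up is in fact more careful than the paper's, since you explicitly justify the freeness of the $C_n$-action on minimal-period-$n$ siteswaps (the paper simply asserts that one divides by $n$).
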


In Table~\ref{tab:jpcount} we give the number of minimal period juggling patterns for $b=2,3,4,5$ and period at most $15$.

\begin{table}[htb!]
\centering
\begin{tabular}{|l||r|r|r|r|}\hline
& $b=2$ & $b=3$ & $b=4$ & $b=5$\\ \hline\hline
$n=1$& $2$& $3$& $5$& $7$\\ \hline
$n=2$& $4$& $12$& $32$& $77$\\ \hline
$n=3$& $13$& $63$& $261$& $964$\\ \hline
$n=4$& $37$& $310$& $2089$& $12086$\\ \hline
$n=5$& $118$& $1618$& $17449$& $156975$\\ \hline
$n=6$& $356$& $8434$& $147807$& $2077448$\\ \hline
$n=7$& $1142$& $45142$& $1276577$& $27976399$\\ \hline
$n=8$& $3620$& $243998$& $11169023$& $381752857$\\ \hline
$n=9$& $11744$& $1336644$& $98872035$& $5267354817$\\ \hline
$n=10$& $38275$& $7392117$& $883717142$& $73358245986$\\ \hline
$n=11$& $126234$& $41247234$& $7964898829$& $1029873201879$\\ \hline
$n=12$& $418735$& $231856131$& $72305691686$& $14559160765380$\\ \hline
$n=13$& $1399610$& $1311820110$& $660528998007$& $207076019661773$\\ \hline
$n=14$& $4702499$& $7464002451$& $6067348742573$& $2961063646029819$\\ \hline
$n=15$& $15883190$& $42679372930$& $56002661734041$& $42542385162393167$\\ \hline
\end{tabular}
\caption{The number of multiplex juggling patterns of minimal period $n$ using $b$ balls.}
\label{tab:jpcount}
\end{table}

As a special case of Theorem~\ref{thm:jpcount} we have
\begin{equation}\label{eq:jp1}
jp_b(1)=\trace(A_b)-\sum_{i=0}^{b-1}\trace(A_i).
\end{equation}
We can also compute the number of period one multiplex juggling patterns directly.
\begin{theorem}\label{thm:jp1}
We have $jp_b(1)=p(b)$, where $p(b)$ is the number of unordered partitions of $b$.
\end{theorem}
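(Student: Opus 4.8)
The plan is to bypass the transfer matrices and count period-one patterns directly. Since $1$ is its own only divisor and $\mu(1)=1$, Theorem~\ref{thm:jpcount} (equivalently, \eqref{eq:jp1} combined with Theorem~\ref{thm:siteswapcount}) reduces the claim to showing that $ss_b(1)=p(b)$: the number of period-one multiplex siteswap patterns using exactly $b$ balls equals the number of unordered partitions of $b$. Indeed there is no rotational symmetry to quotient by when $n=1$, and every period-one pattern automatically has minimal period one.

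Next I would describe period-one multiplex siteswaps concretely. Because every beat is the same beat, such a pattern is completely determined by the multiset $H$ of throw heights made at that single repeating beat. When $b=0$ the only option is the empty throw, which matches $p(0)=1$. When $b\ge 1$, no beat can be empty: by periodicity one empty beat forces every beat to be empty, so no ball would ever be airborne; hence $H$ consists of positive integers. The jugglability condition is then automatic, since with throws $H$ at every beat the number of balls landing at any beat equals $\sum_{d\ge 1}(\text{multiplicity of }d\text{ in }H)=|H|$, exactly what is needed so that they can be re-thrown as $H$. Thus \emph{every} multiset of positive integers is realized by a unique period-one pattern, and the standard siteswap ball count (number of balls equals the average throw height, here averaged over a single beat; see \cite{BEGW}) gives $b=\sum_{h\in H}h$.

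Finally, I would observe that listing the elements of $H$ in nonincreasing order is a bijection between multisets of positive integers summing to $b$ and partitions of $b$, and that distinct multisets give distinct siteswaps, hence distinct period-one juggling patterns. Therefore $ss_b(1)=p(b)$, and so $jp_b(1)=p(b)$. The one step needing a little care is the realizability claim in the previous paragraph (that the consistency condition imposes no restriction on $H$) together with the $b=0$ bookkeeping; both are routine. An alternative but fussier route would instead compute $\trace(A_b)-\sum_{i=0}^{b-1}\trace(A_i)$ by enumerating the loops of $\mathcal{G}_b$ --- the cards with equal ordered partitions on the two sides --- which requires a more delicate ``partition embedded into itself'' count, so I would present the direct argument above.
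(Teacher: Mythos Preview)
Your argument is correct and takes a genuinely different route from the paper. You bypass the card machinery entirely and count period-one multiplex siteswaps directly: such a pattern is a multiset $H$ of positive throw heights (or the empty throw when $b=0$), jugglability is automatic, and the ball count equals $\sum_{h\in H}h$, so the period-one patterns with exactly $b$ balls biject with partitions of $b$. The paper instead stays inside the card framework: a single card giving a valid period-one pattern that uses all $b$ balls must have the same ordered partition $(q_1,\ldots,q_k)$ on both sides, and the requirement that some thrown ball reach the top track forces $q_j$ to embed into $q_{j-1}$ for every $j\ge 2$, hence $q_1\ge q_2\ge\cdots\ge q_k$. Exactly one such card exists per unordered partition, and the paper exhibits it explicitly.

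Your approach is shorter and more elementary, needing only the standard siteswap ball-count formula rather than any of the embedding combinatorics. The paper's approach, on the other hand, pinpoints \emph{which} loops of $\mathcal{G}_b$ correspond to genuine $b$-ball patterns---extra structural information, though the subsequent results (e.g.\ Theorem~\ref{thm:trace}) use only the numerical conclusion $jp_b(1)=p(b)$, so either proof feeds equally well into the rest of the paper. The ``fussier alternative'' you allude to at the end is essentially the route the paper takes.
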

\begin{proof}
In order for a card to produce a valid juggling sequence of period one with $b$ balls we must have that the (ordered) partitions on the left and right side of the card be equal.  That is $(q_1,q_2,\ldots,q_k)=(r_1,r_2,\ldots,r_k)$.  Further we must have that when the $q_1$ balls get distributed some ball gets placed into the top group.  This second requirement will force $q_k$ to ``embed'' into $r_{k-1}$ (otherwise it would have to embed into $r_k$ but the placement of at least one more ball in the top group then results in $q_k\ne r_k$).  In particular this then forces, in turn, $q_i$ to embed in $r_{i-1}$ for $i=2,\ldots,k$.  This is only possible if $q_1\le q_2\le\cdots\le q_k$.  Therefore our ordered partition on the sides of the card have the unique ordering from largest to smallest element.

Conversely, if we start with an unordered partition we can create a card as above by placing the partition on the sides of the card from largest to smallest; then all but the bottom group will shift down by one while the bottom group will then drop down and redistribute to fill differences as needed.  An example of this is shown in Figure~\ref{fig:partition} for the partition $(3,3,2,2).$
\end{proof}

\begin{figure}[htb!]
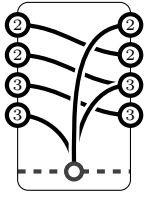

\centering
\picThu
\caption{Forming a period one juggling sequence from the partition $(3,3,2,2)$.}
\label{fig:partition}
\end{figure}

\begin{theorem}\label{thm:trace}
Let $p(b)$ denote the number of unordered partitions of $b$.  Then
\[
\trace(A_b)=p(b)+\sum_{i=0}^{b-1}2^{b-i-1}p(i).
\]
\end{theorem}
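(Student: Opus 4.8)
The plan is to read off $\trace(A_b)$ from the siteswap count already established. Recall from inside the proof of Theorem~\ref{thm:siteswapcount} the intermediate identity
\[
\trace(A_b^n)=ss_b(n)+\sum_{i=0}^{b-1}2^{b-i-1}ss_i(n),
\]
obtained by padding a siteswap on $i$ balls with an ordered partition of the remaining $b-i$ balls placed on top. First I would specialize this to $n=1$. For period one there is no rotational ambiguity and the minimal period is forced to be $1$, so $ss_i(1)=jp_i(1)$ (equivalently, this falls out of Theorem~\ref{thm:jpcount} at $n=1$, where the M\"obius sum has the single term $d=1$). Theorem~\ref{thm:jp1} then gives $ss_i(1)=jp_i(1)=p(i)$ for all $i\ge 0$, and substituting yields exactly $\trace(A_b)=p(b)+\sum_{i=0}^{b-1}2^{b-i-1}p(i)$, as claimed.

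If one prefers an argument that cites only the displayed theorem statements, I would instead start from Theorem~\ref{thm:siteswapcount} at $n=1$, namely $ss_b(1)=\trace(A_b)-\sum_{i=0}^{b-1}\trace(A_i)$, combine it with $ss_b(1)=p(b)$ as above to obtain the recursion $\trace(A_b)=p(b)+\sum_{i=0}^{b-1}\trace(A_i)$, and then solve it by induction on $b$. The base case is $b=0$, where $\trace(A_0)=1=p(0)$ with the sum empty. For the inductive step one substitutes $\trace(A_i)=p(i)+\sum_{j=0}^{i-1}2^{i-j-1}p(j)$ for each $i\le b-1$ and collects the coefficient of a fixed $p(j)$: it receives a $1$ from the ``$p(i)$'' term at $i=j$ and a $2^{i-j-1}$ from each $i=j+1,\dots,b-1$, for a total of $1+\sum_{k=0}^{b-j-2}2^{k}=2^{b-j-1}$, which is precisely the coefficient demanded by the statement, while the leftover $p(b)$ supplies the first summand.

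There is no real obstacle: the single substantive input is Theorem~\ref{thm:jp1}, and everything else is the geometric-series bookkeeping $1+1+2+\cdots+2^{m}=2^{m+1}$ together with the harmless observation that period one collapses the M\"obius and rotation corrections. As a sanity aside, one can also see the formula directly: by Theorem~\ref{thm:walks}, $\trace(A_b)$ counts the loops of $\mathcal{G}_b$, i.e.\ the cards whose left and right ordered partitions agree; sorting these by the number $i\le b$ of balls that actually participate in a nontrivial period-one subpattern (the remaining $b-i$ sitting frozen in never-touched unit-capacity tracks) splits the total into the $p(b)$ loops described in the proof of Theorem~\ref{thm:jp1} together with $2^{b-i-1}p(i)$ loops for each $i<b$.
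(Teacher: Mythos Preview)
Your proposal is correct, and your second route---deriving the recursion $\trace(A_b)=p(b)+\sum_{i<b}\trace(A_i)$ from \eqref{eq:jp1} together with Theorem~\ref{thm:jp1} and then solving it by induction---is exactly the paper's argument (the paper telescopes the sum step by step rather than collecting coefficients of $p(j)$, but the content is identical). Your first route, specializing the intermediate identity from inside the proof of Theorem~\ref{thm:siteswapcount} to $n=1$, is a legitimate one-line shortcut to the same end. One small slip in your closing aside: the frozen $b-i$ balls sit in an arbitrary ordered partition of $b-i$, not in ``unit-capacity tracks''---that arbitrary grouping is precisely where the factor $2^{b-i-1}$ comes from.
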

\begin{proof}
Updating equation \eqref{eq:jp1} using Theorem~\ref{thm:jp1} we have
\[
p(b) = \trace(A_b)-\sum_{i=0}^{b-1}\trace(A_i).
\]
We now proceed by induction.  We have $\trace(A_0)=1=p(0)$ (note that the sum will be empty and not contribute), establishing the base case.  Now suppose that we have established the result up through $b-1$ and consider the case for $b$.
\begin{align*}
\trace({A}_b)&=p(b)+\sum_{i=0}^{b-1}\trace(A_i)\\
&=p(b)+p(b-1)+2\sum_{i=0}^{b-2}\trace(A_i)\\
&=p(b)+p(b-1)+2p(b-2)+2^2\sum_{i=0}^{b-3}\trace(A_i)\\
&=\cdots \\
&=p(b)+p(b-1)+2p(b-2)+2^2p(b-3)+\cdots+2^{b-1}p(0)\\
&=p(b)+\sum_{i=0}^{b-1}2^{b-i-1}p(i)
\end{align*}
Establishing the result.
\end{proof}

Using this we get that the trace of $A_b$ starting with $b=0$ is 
\[
1, ~2, ~5, ~11, ~24, ~50, ~104, ~212, ~431, ~870, ~1752, ~3518, ~7057, ~14138, ~28310, ~56661, \ldots.
\]
This is sequence {\tt A090764} in the OEIS \cite{OEIS}.  The recurrence that we have derived is a variant of that given in the OEIS.  We now give a different variation establishing that this is the same sequence as given in the OEIS.

\begin{theorem}
Let $\mathcal{P}(b)$ be the set of unordered partitions of $b$, and for a partition $q$ let $o(q)$ denote the number of $1$s in the partition.  Then
\[ 
\trace({A}_b)=\sum_{q\in \mathcal{P}(b)}2^{o(q)}.
\]
\end{theorem}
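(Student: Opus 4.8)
The plan is to build on Theorem~\ref{thm:trace}, which already expresses $\trace(A_b) = p(b)+\sum_{i=0}^{b-1}2^{b-i-1}p(i)$; it therefore suffices to show that this quantity equals $\sum_{q\in\mathcal{P}(b)}2^{o(q)}$. The natural way to attack the latter sum is to sort the partitions of $b$ by the number $j=o(q)$ of parts equal to $1$. Deleting all $j$ of those $1$s from such a partition produces a partition of $b-j$ all of whose parts are at least $2$, and this operation is a bijection between $\{q\in\mathcal{P}(b):o(q)=j\}$ and the set of such partitions of $b-j$. Writing $p_{\ge 2}(m)$ for the number of partitions of $m$ into parts $\ge 2$ (with $p_{\ge 2}(0)=1$), this gives
\[
\sum_{q\in\mathcal{P}(b)}2^{o(q)}=\sum_{j=0}^{b}2^{j}\,p_{\ge 2}(b-j)=\sum_{m=0}^{b}2^{b-m}\,p_{\ge 2}(m).
\]

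Next I would use the elementary identity $p_{\ge 2}(m)=p(m)-p(m-1)$ for $m\ge 1$: a partition of $m$ either contains a part equal to $1$ (deleting one such part leaves a partition of $m-1$) or it does not. Together with $p_{\ge 2}(0)=1=p(0)$, substituting into the sum above and collecting the two resulting families of terms $\sum_m 2^{b-m}p(m)$ and $\sum_m 2^{b-m-1}p(m)$ should make almost everything telescope, leaving precisely $p(b)+\sum_{i=0}^{b-1}2^{b-i-1}p(i)$, which matches the formula from Theorem~\ref{thm:trace}. That completes the chain of equalities.

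An equivalent and perhaps cleaner write-up is by induction on $b$: removing a single $1$ shows $\sum_{q\in\mathcal{P}(b)}2^{o(q)}=2\sum_{q\in\mathcal{P}(b-1)}2^{o(q)}+p_{\ge 2}(b)$, and a one-line manipulation of the formula in Theorem~\ref{thm:trace} shows $\trace(A_b)$ satisfies the same recurrence $\trace(A_b)=2\trace(A_{b-1})+p(b)-p(b-1)$, with both sides equal to $1$ when $b=0$. There is no genuine obstacle in either route; the only points requiring care are the boundary values of the index $j$ — namely $j=b$, corresponding to the all-ones partition and to $p_{\ge 2}(0)=1$, and $j=b-1$, which never occurs, consistent with $p_{\ge 2}(1)=0$ — and keeping the shifts straight in the telescoping sum.
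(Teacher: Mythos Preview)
Your proposal is correct, and your inductive route is essentially identical to the paper's proof: both split off the partitions with no $1$s (counted by $p(b)-p(b-1)$), remove a single $1$ from the rest to reduce to $b-1$, and match against the recurrence $\trace(A_b)=2\trace(A_{b-1})+p(b)-p(b-1)$ coming from Theorem~\ref{thm:trace}. Your first route is a mild non-inductive variant of the same idea---grouping by the exact value of $o(q)$ instead of just whether it is zero---and the ``telescoping'' you describe is really the subtraction $\sum_{i=0}^{b-1}(2^{b-i}-2^{b-i-1})p(i)=\sum_{i=0}^{b-1}2^{b-i-1}p(i)$, which works just as you say.
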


\begin{proof}
We proceed by induction on $b$.  The result is easily checked for small cases, so now assume it works up through $b-1$.  Then we have the following.
\begin{align*}
\sum_{q \in \mathcal{P}_b}2^{o(q)}&=
\sum_{\substack{q\in\mathcal{P}_b\\ o(q)=0}} 2^{o(q)}+\sum_{\substack{q\in\mathcal{P}_b \\ o(q)>0}}2^{o(q)}\\
&=\sum_{\substack{q\in\mathcal{P}_b\\o(q)=0}} 1+2\sum_{q\in\mathcal{P}_{b-1}}2^{o(q)}\\
&=\big(p(b)-p(b-1)\big)+2\trace(A_{b-1})\\
&=\big(p(b)-p(b-1)\big)+2 \bigg(p(b-1)+\sum_{i=0}^{b-2}2^{b-i-2}p(i)\bigg)\\
&=p(b)+\sum_{i=0}^{b-1}2^{b-i-1}p(i)\\
&=\trace(A_{b}).
\end{align*}
We first divide up our partitions on whether we include a $1$; then we note that there are $p(b)-p(b-1)$ partitions that do not include a $1$ and $p(b-1)$ partitions that do include a $1$ (i.e., taking a partition with $b-1$ we can append a $1$ to get a partition of $b$ which does include a $1$).  We apply the induction hypothesis, and then apply Theorem~\ref{thm:trace} once, clean up our sum, and finally apply Theorem~\ref{thm:trace} a second time.
\end{proof}

\section{Juggling patterns with hand capacity}\label{sec:hand}
One of our basic assumptions that we have employed is that we can catch and throw any number of balls at any given step.  From a practical standpoint jugglers usually limit themselves to catching and throwing at most two or three balls at a time.  The approach outlined above works just as well when we introduce a capacity constraint into how many balls can land at a given time, or equivalently how many balls can be thrown at a given time.

To do this we let $A_{b,\kappa}$ be the principal submatrix of $A_b$ by taking all rows and columns indexed by ordered partitions with all parts of size at most $\kappa$.

\begin{theorem}\label{thm:capacity}
Let $ss_{b,\kappa}(n)$ be the number of siteswap patterns using exactly $b$ balls, with period $n$ and all throws involve at most $\kappa$ balls.  Then
\begin{equation}\label{eq:capss}
ss_{b,\kappa}(n)=\trace(A_{b,\kappa}^n)-\sum_{i=\max\{0,b-\kappa\}}^{b-1}\trace(A_{i,\kappa}^n).
\end{equation}
Let $jp_{b,\kappa}(n)$ be the number of juggling patterns using exactly $b$ balls, with minimal period $n$ and all throws involve at most $\kappa$ balls.  Then
\begin{equation}\label{eq:capjp}
jp_{b,\kappa}(n)=\frac1n\sum_{d\mid n}\mu({\textstyle\frac{n}{d}})\bigg(\trace(A_{b,\kappa}^d)-\sum_{i=\max\{0,b-\kappa\}}^{b-1}\trace(A_{i,\kappa}^d)\bigg).
\end{equation}
\end{theorem}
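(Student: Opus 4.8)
The plan is to imitate, step for step, the proofs of Theorem~\ref{thm:siteswapcount} and Theorem~\ref{thm:jpcount}, with the principal submatrix $A_{b,\kappa}$ playing the role of $A_b$, and with the enumeration of ``padding'' partitions now being the number of $\kappa$-bounded compositions rather than $2^{b-i-1}$. Throughout, write $c_\kappa(m)$ for the number of ordered partitions of $m$ all of whose parts are at most $\kappa$, so that $c_\kappa(0)=1$, $c_\kappa(m)=0$ for $m<0$, and $c_\kappa(m)=\sum_{t=1}^{\kappa}c_\kappa(m-t)$ for $m\ge1$ (and $c_\kappa(p)=\sum_{m=0}^{p-1}c_\kappa(m)$ when $1\le p\le\kappa$, which is that recurrence after discarding the terms with negative index). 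Note that when $\kappa\ge b$ the capacity constraint is vacuous, $A_{b,\kappa}=A_b$, $c_\kappa(m)=2^{m-1}$ for $m\ge1$, and \eqref{eq:capss} becomes Theorem~\ref{thm:siteswapcount}; this is the sanity check that the general argument must specialize to.

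I would first establish the siteswap count \eqref{eq:capss}. By construction the vertices of the subgraph carried by $A_{b,\kappa}$ are exactly the ordered partitions of $b$ with all parts at most $\kappa$, so by Observation~\ref{obs:bijection} and Theorem~\ref{thm:walks} the quantity $\trace(A_{b,\kappa}^n)$ counts periodic card sequences of length $n$ in which every partition that appears is $\kappa$-bounded, equivalently in which every throw (whose size is the bottom part $q_1$) involves at most $\kappa$ balls. Applying the same ``active balls versus frozen top stack'' decomposition used in the proof of Theorem~\ref{thm:siteswapcount}, each such sequence is the same data as a siteswap on exactly $i$ balls with all throws of size at most $\kappa$, of which there are $ss_{i,\kappa}(n)$, together with a frozen ordered partition of $b-i$ into parts of size at most $\kappa$ stacked on top, of which there are $c_\kappa(b-i)$, for some $0\le i\le b$. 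Hence
\[
\trace(A_{b,\kappa}^n)=ss_{b,\kappa}(n)+\sum_{i=0}^{b-1}c_\kappa(b-i)\,ss_{i,\kappa}(n),
\]
the $i=b$ contribution (empty stack, $c_\kappa(0)=1$) being the standalone $ss_{b,\kappa}(n)$ term. Now induct on $b$: the base case $b=0$ holds since $A_{0,\kappa}=(1)$ and $ss_{0,\kappa}(n)=1$; for the inductive step solve the display for $ss_{b,\kappa}(n)$ and substitute the inductive formula for each $ss_{i,\kappa}(n)$, $i<b$. The coefficient of $\trace(A_{i,\kappa}^n)$ in the resulting expression works out to $-c_\kappa(b-i)+\sum_{m=\max\{1,\,b-i-\kappa\}}^{b-i-1}c_\kappa(m)$, which by the two facts about $c_\kappa$ recorded above equals $0$ when $i<b-\kappa$ and $-1$ when $b-\kappa\le i\le b-1$ (and the coefficient of $\trace(A_{b,\kappa}^n)$ is $1$). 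This is exactly \eqref{eq:capss}.

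The juggling-pattern count \eqref{eq:capjp} then requires nothing new: a $\kappa$-bounded periodic card sequence of period $d$ repeats, whenever $d\mid n$, to a $\kappa$-bounded periodic card sequence of period $n$, so writing $ms_{b,\kappa}(n)$ for the number of $\kappa$-bounded siteswaps of minimal period $n$ we get $ss_{b,\kappa}(n)=\sum_{d\mid n}ms_{b,\kappa}(d)$; M\"obius inversion gives $ms_{b,\kappa}(n)=\sum_{d\mid n}\mu(n/d)\,ss_{b,\kappa}(d)$, and dividing by $n$ to quotient out the choice of starting beat gives $jp_{b,\kappa}(n)=\tfrac1n\,ms_{b,\kappa}(n)$. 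Substituting \eqref{eq:capss} for $ss_{b,\kappa}(d)$ produces \eqref{eq:capjp}. This is the verbatim analogue of the passage from Theorem~\ref{thm:siteswapcount} to Theorem~\ref{thm:jpcount}.

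The step I expect to be the main obstacle is the displayed identity. One must verify carefully that passing to the principal submatrix is genuinely the same as capping throw sizes at $\kappa$ (so that the inner summand is $ss_{i,\kappa}$ and not $ss_i$), and that the frozen padding is counted by $c_\kappa(b-i)$; this is the $\kappa$-aware version of the assertion underlying the proof of Theorem~\ref{thm:siteswapcount}. Once that is in hand, the only difference from the unconstrained case is that the single identity $2^{p-1}=1+\sum_{k=0}^{p-2}2^k$ is replaced by the two facts about $c_\kappa$ above, and it is precisely the split into the regimes $p\le\kappa$ versus $p>\kappa$ (with $p=b-i$) that forces the lower summation limit $\max\{0,b-\kappa\}$ in \eqref{eq:capss} and hence in \eqref{eq:capjp}.
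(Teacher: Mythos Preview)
Your proposal is correct and follows essentially the same route as the paper's proof: both introduce the number of $\kappa$-bounded ordered partitions (your $c_\kappa(m)$ is the paper's $r_{m,\kappa}$), establish the same decomposition $\trace(A_{b,\kappa}^n)=ss_{b,\kappa}(n)+\sum_{i=0}^{b-1}c_\kappa(b-i)\,ss_{i,\kappa}(n)$, and then induct on $b$, with the recursion for $c_\kappa$ forcing the lower limit $\max\{0,b-\kappa\}$. Your explicit computation of the coefficient of $\trace(A_{i,\kappa}^n)$ is in fact more detailed than the paper's, which simply asserts that ``almost all terms will $0$ out, except for the first few initial terms which can easily be checked to be one.''
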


Before beginning the proof, observe if $\kappa=1$, then this reduces to the case of normal juggling (i.e., $A_{b,1}=(b+1)$); and if $\kappa=\infty$ then this is equivalent to what we have already done.

\begin{proof}
We note that \eqref{eq:capjp} follows from \eqref{eq:capss} by applying M\"obius inversion.  So it suffices to establish \eqref{eq:capss}.

We proceed by induction on $b$. For the base case of $b=0$ there is one card (the empty card) and the capacity places no restriction on its use, and so for any length there is exactly one way to position these cards.  Since $\trace(A_0^n)=1$ this establishes the base case.

Let $r_{i,\kappa}$ be the number of ordered partitions of $i$ with each part at most $\kappa$.  By grouping based on the first part (which has size between $1$ and $\kappa$) we have
\begin{equation}\label{eq:rrecur}
r_{i,\kappa}=\sum_{j=1}^{\min\{\kappa,i\}} r_{i-j,\kappa},
\end{equation}
the $\min\{\kappa,i\}$ coming from noting that we cannot have ordered partitions with negative parts and so we need to handle the case of small $i$.

Now assume that we have established the result up through $b-1$ and consider the case for $b$.  First we observe
\[
\trace(A_{b,\kappa}^n)=ss_{b,\kappa}(n)+\sum_{i=0}^{b-1}r_{b-i,\kappa}ss_{i,\kappa}(n).
\]
This follows by noting that for every siteswap which uses exactly $i$ balls we can find a sequence of cards which uses exactly  $i$ balls.  We can now add $b-i$ balls to the top of each card as long as we do it consistently across the different cards.  Moreover the number of different options we have to place the $b-i$ balls equals the number of ordered partitions of $b-i$ with each part at most $\kappa$ which is $r_{b-i,\kappa}$.

Rearranging and using our induction hypothesis we have
\begin{align*}
ss_{b,\kappa}(n)&=
\trace(A_{b,\kappa}^n)-\sum_{i=0}^{b-1}r_{b-i,\kappa}ss_{i,\kappa}(n)\\
&=\trace(A_{b,\kappa}^n)-\sum_{i=0}^{b-1}r_{b-i,\kappa}\bigg(\trace(A_{i,\kappa}^n)-\sum_{j=\max\{0,i-\kappa\}}^{i-1}\trace(A_{j,\kappa}^n)\bigg)\\
&=\trace(A_{b,\kappa}^n)-\sum_{i=0}^{b-1}\trace(A_{i,\kappa}^n)\bigg(r_{b-i,\kappa}-\sum_{j=\max\{0,b-i-\kappa\}}^{b-i-1}r_{j,\kappa}\bigg)\\
&=\trace(A_{b,\kappa}^n)-\sum_{i=\max\{0,b-\kappa\}}^{b-1}\trace(A_{i,\kappa}^n).
\end{align*}
In going from the first to the second line we use the induction hypothesis, and in going from the second to the third line we rearrange the terms.  Finally we observe that \eqref{eq:rrecur} indicates that almost all terms will $0$ out, except for the first few initial terms which can easily be checked to be one, establishing the result.
\end{proof}

For reference we have produced the number of juggling patterns for $\kappa=2$ in Table~\ref{tab:cap2} and for $\kappa=3$ in Table~\ref{tab:cap3}.

\begin{table}[htb!]
\centering
\begin{tabular}{|l|r|r|r|r|}\hline
$\kappa=2$ & $b=2$ & $b=3$ & $b=4$ & $b=5$\\ \hline\hline
$n=1$& $2$& $2$& $3$& $3$\\ \hline
$n=2$& $4$& $9$& $18$& $30$\\ \hline
$n=3$& $13$& $47$& $134$& $314$\\ \hline
$n=4$& $37$& $224$& $950$& $3140$\\ \hline
$n=5$& $118$& $1118$& $6938$& $31886$\\ \hline
$n=6$& $356$& $5522$& $50751$& $324909$\\ \hline
$n=7$& $1142$& $27910$& $376402$& $3341566$\\ \hline
$n=8$& $3620$& $141946$& $2813824$& $34605634$\\ \hline
$n=9$& $11744$& $730544$& $21219536$& $360849352$\\
\hline
$n=10$& $38275$& $3790391$& $161190485$& $3785776259$\\
\hline
$n=11$& $126234$& $19827570$& $1232724798$&
$39941119938$\\ \hline
$n=12$& $418735$& $104422007$& $9483975303$&
$423549648963$\\ \hline
$n=13$& $1399610$& $553339258$& $73360425430$&
$4512516867634$\\ \hline
$n=14$& $4702499$& $2947940371$& $570219618745$&
$48282551418859$\\ \hline
$n=15$& $15883190$& $15780565950$& $4451677886746$&
$518633980103198$\\ \hline
\end{tabular}
\caption{The number of juggling patterns of period $n$ using $b$ balls with capacity $2$.}
\label{tab:cap2}
\end{table}

\begin{table}[htb!]
\centering
\begin{tabular}{|l|r|r|r|r|}\hline
$\kappa=3$ & $b=2$ & $b=3$ & $b=4$ & $b=5$\\ \hline\hline	
$n=1$& $2$& $3$& $4$& $5$\\ \hline
$n=2$& $4$& $12$& $28$& $58$\\ \hline
$n=3$& $13$& $63$& $231$& $713$\\ \hline
$n=4$& $37$& $310$& $1840$& $8591$\\ \hline
$n=5$& $118$& $1618$& $15168$& $106073$\\ \hline
$n=6$& $356$& $8434$& $126258$& $1325570$\\ \hline
$n=7$& $1142$& $45142$& $1069002$& $16789985$\\ \hline
$n=8$& $3620$& $243998$& $9154845$& $214916096$\\ \hline
$n=9$& $11744$& $1336644$& $79252442$& $2776778019$\\
\hline
$n=10$& $38275$& $7392117$& $692290928$& $36167946945$\\
\hline
$n=11$& $126234$& $41247234$& $6095630354$&
$474470288650$\\ \hline
$n=12$& $418735$& $231856131$& $54045188641$&
$6263882726811$\\ \hline
$n=13$& $1399610$& $1311820110$& $482108239540$&
$83162406390939$\\ \hline
$n=14$& $4702499$& $7464002451$& $4323812672665$&
$1109678347266127$\\ \hline
$n=15$& $15883190$& $42679372930$& $38963338572980$&
$14873888879020290$\\ \hline
\end{tabular}
\caption{The number of juggling patterns of period $n$ using $b$ balls with capacity $3$.}
\label{tab:cap3}
\end{table}

We note that we can ask similar questions about the sum of the entries in $A_{b,\kappa}$ as well as the row and column sums as we did with $A_{b}$ (i.e., Lemma~\ref{lem:right}, Lemma~\ref{lem:left} and Theorem~\ref{thm:cardrecurse}).  However the counts are less clear, and have not appeared in the OEIS.  As an example if we count the total number of cards when $\kappa =2$ we get the following numbers, starting with $b=0$:
\[
1,~2,~7,~17,~41,~91,~195,~403,~812,~1601,~3102,~5922,~11165,~20824,~38477,\ldots.
\]
These numbers do appear to satisfy a relatively simple relationship.  In particular through $b=25$ these numbers agree with the following conjecture.

\begin{conjecture}
Let $a_b^{(2)}$ be the number of cards for multiplex juggling with $b$ balls and where each track has capacity at most $2$.  Then
\[
\sum_{b\ge 0}a_b^{(2)}x^b = \frac{1-x+x^2+x^3}{(1-x-x^2)^3}.
\]
\end{conjecture}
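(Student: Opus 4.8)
The plan is to express $a_b^{(2)}$ (the number of cards, equivalently the sum of all entries of $A_{b,2}$) as the number of cards for which both the left and the right ordered partition have every part at most $2$ --- this is exactly the condition that the card is an arc of $\mathcal{G}_{b,2}$, since the thrown group $q_1$ is a part of the left partition --- and then to count these cards with a bivariate generating function that records the number of balls $b$ and the size $q_1$ of the group that is thrown. The starting point is the bijection underlying the proof of Lemma~\ref{lem:right}: the cards with a fixed right partition $(r_1,\ldots,r_\ell)$ are in bijection with tuples $(a_1,\ldots,a_\ell)$ satisfying $0\le a_i\le r_i$, where $a_i$ is the amount of slot $i$ used by the embedded left partition; in such a card the thrown group has size $q_1=b-\sum_i a_i$ and the remaining parts of the left partition are the positive $a_i$ (the tuple $a_i=r_i$ gives $q_1=0$, the unique trivial card for that partition). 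For a capacity-$2$ card the right partition is an ordered partition of $b$ into $1$'s and $2$'s, each of the parts $q_2,\ldots,q_k$ is then automatically at most $2$, so the only additional requirement is $q_1\le 2$.

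Accordingly I would form the generating function $F(x,y)=\sum x^{b}y^{q_1}$, the sum taken over all pairs consisting of an ordered partition of $b$ into $1$'s and $2$'s together with an admissible tuple $(a_i)$, with no restriction yet on $q_1$. Since such an ordered partition is a finite sequence of parts, a part equal to $1$ contributes the factor $x(1+y)$ (it adds $1$ to $b$ and $0$ or $1$ to $q_1$) and a part equal to $2$ contributes $x^2(1+y+y^2)$ (it adds $2$ to $b$ and $0$, $1$, or $2$ to $q_1$), and the sequence structure yields
\[
F(x,y)=\frac{1}{1-x(1+y)-x^2(1+y+y^2)}.
\]
The sought series is then obtained by discarding all terms with $q_1\ge 3$, i.e.
\[
\sum_{b\ge 0}a_b^{(2)}x^b=\bigl([y^0]+[y^1]+[y^2]\bigr)F(x,y),
\]
where $[y^j]F$ denotes the coefficient of $y^j$ in $F$ viewed as a power series in $x$.

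To finish I would extract these three coefficients. Writing $P=1-x-x^2$, $Q=x(1+x)$, $R=x^2$ so that $F=(P-Qy-Ry^2)^{-1}$, one reads off $[y^0]F=P^{-1}$, $[y^1]F=QP^{-2}$, and $[y^2]F=RP^{-2}+Q^2P^{-3}$. Summing these over the common denominator $P^3$ gives numerator $P^2+QP+RP+Q^2$, and expanding this polynomial collapses it to $1-x+x^2+x^3$; hence
\[
\sum_{b\ge0}a_b^{(2)}x^b=\frac{1-x+x^2+x^3}{(1-x-x^2)^3},
\]
as claimed. I expect the only delicate step to be the bookkeeping in the first paragraph: checking that, under the Lemma~\ref{lem:right} bijection, the capacity-$2$ condition on a card is \emph{exactly} $q_1\le 2$, so that no capacity-$2$ card is missed or counted twice and the trivial cards are precisely the $q_1=0$ contribution. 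Once that correspondence is nailed down the remainder is a routine generating-function computation, which can also be sanity-checked against the listed values $1,2,7,17,41,91,\ldots$ and against the linear recurrence encoded by the denominator $(1-x-x^2)^3$.
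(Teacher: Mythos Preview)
Your argument is correct, and in fact it settles what the paper leaves open: in the paper this statement is only a conjecture, verified numerically through $b=25$, with no proof offered. So there is nothing to compare against---your proof is genuinely new.

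The key observation you make, that under the Lemma~\ref{lem:right} bijection a card with right partition in $\{1,2\}^*$ automatically has $q_2,\dots,q_k\le 2$ (since each $q_j$ embeds into some $r_i\le 2$) so that the capacity-$2$ condition reduces to the single inequality $q_1\le 2$, is exactly right; the trivial card is the tuple $a_i=r_i$, which has $q_1=0$ and is therefore correctly retained. The bivariate generating function
\[
F(x,y)=\frac{1}{1-x(1+y)-x^2(1+y+y^2)}
\]
then records $b$ and $q_1$ multiplicatively over the parts of the right partition, and extracting $[y^0]+[y^1]+[y^2]$ via the expansion $F=P^{-1}\sum_{m\ge0}\bigl((Q/P)y+(R/P)y^2\bigr)^m$ gives $P^{-1}+QP^{-2}+(RP^{-2}+Q^2P^{-3})$; the numerator $P^2+QP+RP+Q^2$ does indeed collapse to $1-x+x^2+x^3$ (I checked the arithmetic). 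The values $1,2,7,17,41,\ldots$ and the order-$6$ linear recurrence with characteristic polynomial $(1-x-x^2)^3$ both agree.

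One small presentational point: in your first paragraph you write ``the thrown group $q_1$ is a part of the left partition'' as justification that $q_1\le 2$ is the relevant constraint. This is fine for nontrivial cards, but for the trivial card your parameter $q_1=b-\sum a_i$ equals $0$, which is not literally a part of the left partition. It would be cleaner to say explicitly that the $q_1=0$ case recovers the trivial card (left $=$ right), for which the capacity constraint is automatic; this is the ``bookkeeping'' you flag as delicate, and it checks out.
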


\section{Conclusion}\label{sec:conclusion}
By modifying the cards used for juggling, namely allowing groups of balls to be grouped together, we have found a method that works for enumerating multiplex juggling patterns.  There are still a few questions that remain, particularly in understanding what happens when we limit the number of balls that can be caught at any given time.

\bigskip

Ehrenborg and Readdy \cite{ER} introduced cards for juggling and used them to study a $q$-analog of juggling by counting crossings.  It is easy to count crossings on each card and then one simply adds up the crossings over all cards used to count the crossings of the pattern.  We note that the matrices used here can be easily adapted to this situation.  Namely for each card we count crossings (making sure to count multiplicity when balls move in groups), and then weight the card (and hence edge in the graph) by $q^k$ where $k$ is the number of crossings.  Finally we can form matrices $A_b(q)$ where we add up the weights of cards connecting ordered partitions.  As an example we have
\[
A_3(q)=
\begin{array}{r}(3)\\(2,1)\\(1,2)\\(1,1,1)\end{array}\left(\begin{array}{cccc}
2 & 1 & 1 & 1 \\
1 & q+2 & q+1 & q^2+q+1 \\
1 & q & 2 & 0 \\
0 & 1 & q & q^2+q+2
\end{array}\right).
\]
We note that Theorem~\ref{thm:siteswapcount} and Theorem~\ref{thm:capacity} can be easily modified to count the number of juggling patterns with minimal period based on the number of crossings.

\bigskip

An applied mathematical juggler might also want to add the constraint that whenever multiple balls are thrown that no two balls get thrown to the same height.  Our method can be readily adopted to this situation by simply removing any card which has two balls moved to the same track, which leads to modified graphs $\widehat{\mathcal{G}}_b$, and also modified matrices, $\widehat{A}_b$.  For example we have
\[
\widehat{A}_2=\begin{array}{r}(2)\\(1,1)\end{array}\left(\begin{array}{cc}
1 & 1 \\
1 & 3
\end{array}\right),\qquad\text{and}\quad
\widehat{A}_3=
\begin{array}{r}(3)\\(2,1)\\(1,2)\\(1,1,1)\end{array}\left(\begin{array}{cccc}
1 & 0 & 0 & 1 \\
0 & 2 & 1 & 3 \\
1 & 1 & 2 & 0 \\
0 & 1 & 1 & 4
\end{array}\right).
\]

\bigskip

The matrices $A_b$ might also have independent interest.  For example, it is easy to see that for $1\le i\le 3$ that $A_i$ is a principal submatrix of $A_{i+1}$.  This seems to continue for at least the first few cases. 
Does this containment continue?  Note that this also seems to indicate a preferred ordering of the ordered partitions if we want to have (1) containment of the previous matrix in the upper left block and (2) an upper triangular matrix in the lower left block.  What is this ordering?

Things get even more interesting when we consider the characteristic polynomial of $A_b$.  If we let $P_b:=P_b(x)=\det(xI-A_b)$, then we have the following.
\[
\begin{array}{l@{\,=\,}llllllllllllll}
P_0&f_0^1\\
P_1&&f_1^1\\
P_2&&&f_2^1\\
P_3&f_0^1&&&f_3^1\\
P_4&f_0^2&f_1^1&&&f_4^1\\
P_5&f_0^5&f_1^2&f_2^1&&&f_5^1\\
P_6&f_0^9&f_1^5&f_2^2&f_3^1&&&f_6^1\\
P_7&f_0^{19}&f_1^9&f_2^5&f_3^2&f_4^1&&&f_7^1\\
P_8&f_0^{37}&f_1^{19}&f_2^9&f_3^5&f_4^2&f_5^1&&&f_8^1\\
P_9&f_0^{74}&f_2^{37}&f_2^{19}&f_3^9&f_4^5&f_5^2&f_6^1&&&f_9^1\\
P_{10}&f_0^{148}&f_1^{74}&f_2^{37}&f_3^{19}&f_4^9&f_5^5&f_6^2&f_7^1&&&f_{10}^1\\
P_{11}&f_0^{296}&f_1^{148}&f_2^{74}&f_3^{37}&f_4^{19}&f_5^9&f_6^5&f_7^2&f_8^1&&&f_{11}^1\\
P_{12}&f_0^{591}&f_1^{296}&f_2^{148}&f_3^{74}&f_4^{37}&f_5^{19}&f_6^9&f_7^5&f_8^2&f_9^1&&&f_{12}^1\\
P_{13}&f_0^{1183}&f_1^{591}&f_2^{296}&f_3^{148}&f_4^{74}&f_5^{37}&f_6^{19}&f_7^9&f_8^5&f_9^2&f_{10}^1&&&f_{13}^1
\end{array}
\]
Where the polynomials $f_i$ are given by the following.
\begin{align*}
f_0(x)&=x-1\\
f_1(x)&=x-2\\
f_2(x)&=x^2-5x+5\\
f_3(x)&=x^3 - 10x^2 + 27x -20\\
f_4(x)&=x^5 - 20x^4 + 135x^3 - 396x^2 + 518x - 245\\
f_5(x)&=x^7 - 36x^6 + \cdots - 2^5{\cdot}5{\cdot}7^2
\\
f_6(x)&=x^{11} - 65x^{10} + \cdots - 2^5 {\cdot} 3^2 {\cdot} 5^2 {\cdot} 7^3\\
f_7(x)&=x^{15} - 110x^{14} + \cdots - 2^{11} {\cdot} 3^2 {\cdot} 5^3 {\cdot} 7^4\\
f_8(x)&=x^{22} - 185x^{21} + \cdots + 2^{10} {\cdot} 3^4 {\cdot} 5^4 {\cdot} 7^6 {\cdot} 11^2\\
f_9(x)&=x^{30} - 300x^{29} + \cdots + 2^{21} {\cdot} 3^5 {\cdot} 5^6 {\cdot} 7^8 {\cdot} 11^2\\
f_{10}(x)&=x^{42} - 481x^{41} + \cdots + 2^{21} {\cdot} 3^9 {\cdot} 5^8 {\cdot} 7^{12} {\cdot} 11^3 {\cdot} 13^2
\\
f_{11}(x)&=x^{56} - 752x^{55} +\cdots + 2^{38} {\cdot} 3^{12} {\cdot} 5^{11} {\cdot} 7^{16} {\cdot} 11^4 {\cdot} 13^2\\
f_{12}(x)&=x^{77} - 1165x^{76}+\cdots - 2^{42} {\cdot} 3^{17} {\cdot} 5^{16} {\cdot} 7^{22} {\cdot} 11^9 {\cdot} 13^3\\
f_{13}(x)&=x^{101} - 1770x^{100} +\cdots - 2^{70} {\cdot} 3^{23} {\cdot} 5^{21} {\cdot} 7^{29} {\cdot} 11^{11} {\cdot} 13^4
\end{align*}

There are several interesting things that are appearing.
\begin{itemize}
\item The sequence of the exponents of $f_i$ in the $P_j$ seem to follow 
\[
1,~0,~0,~1,~2,~5,~9,~19,~37,~74,~148,~296,~591,~1183,\ldots.
\]
This appears to be the sequence {\tt A178841} in the OEIS \cite{OEIS} which counts the number of pure inverting compositions of $n$.
\item The degree of the polynomials $f_i$ follow
\[
1,~1,~2,~3,~5,~7,~11,~15,~22,~30,~42,~56,~77,~101,\ldots.
\]
This appears to be the sequence {\tt A000041} in the OEIS \cite{OEIS} which counts the number of unordered partitions of $n$.
\item The second coefficients of the polynomials $f_i$ follow
\[
1,~2,~5,~10,~20,~36,~65,~110,~185,~300,~481,~752,~1165,~1770,\ldots.
\]
This appears to be the sequence {\tt A000712} in the OEIS \cite{OEIS} which counts the number of unordered partitions of $n$ into parts of $2$ kinds.
\end{itemize}

We have no explanations for any of these phenomenon, but given the nature of how the matrix is formed believe this is more than coincidence.  We look forward to more research being done into these cards and matrices.

\end{document}